\DeclareMathOperator{\pstre}{p-stre}
\DeclareMathOperator{\ev}{ev}
\DeclareMathOperator{\Aut}{Aut}
\newcommand{\Ad}[1]{\text{Ad}#1}
\newcommand{\ddt}{\frac{d}{dt}}
\newcommand{\g}{\mathfrak{g}}
\newcommand{\KK}{\mathcal{K}}
\newcommand{\GG}{\mathcal{G}}
\newcommand{\VV}{\mathcal{V}}
\newcommand{ \TT}{\mathcal{T}}
\newcommand{\OO}{\mathcal{O}}
\newcommand{\R}{\mathbb{R}}
\newcommand{\X}{\mathfrak{X}}
\newcommand{\wt}{\widetilde} 
\DeclareMathOperator{\Gl}{Gl}
\DeclareMathOperator{\rank}{rank}
\newtheorem{theorem}{Theorem}[section] 
\newtheorem{lemma}[theorem]{Lemma} 
\newtheorem{cor}[theorem]{Corollary} 
\newtheorem{prop}[theorem]{Proposition}
\theoremstyle{definition}
\newtheorem{defn}[theorem]{Definition}
\newtheorem{rmk}[theorem]{Remark}
\title[Connection preserving actions]{Connection preserving actions
are topologically engaging} 
\author{A. Candel}
\thanks{Research supported by N.S.F. Grant No. DMS-9973086 and
DMS-0049077.}
\address{Department of Mathematics \\ 
	 California State University \\ Northridge, CA 91330, USA}
\email{alberto.candel@csun.edu}
\author{ R. Quiroga--Barranco}  
\address{Departamento de Matem\'aticas \\  
	 CINVESTAV-IPN \\  
	 Apartado Postal 14-740 \\  
	 M\'exico DF 07300 \\  
	 M\'exico}  
\email{quiroga@math.cinvestav.mx}  
\thanks{Research supported by CONACYT Grant No. 32197-E}
\subjclass{Primary: 53C10; Secondary: 53C24}
\keywords{semisimple, group, action, connection, topological engagement,
geometric engagement, rigidity}
\begin{document}

\begin{abstract}
Topologically and geometrically engaging actions have proved to be
useful to obtain rigidity results for semisimple Lie group actions
(see \cite{SZ91}, \cite{CQ99}). We show that the action of a simple
noncompact Lie group on a compact manifold preserving a unimodular
rigid geometric structure of algebraic type (e.g.~a connection
together with a volume density) is topologically engaging on an open
conull dense set.
\end{abstract}

\maketitle

\section{Introduction}

A fundamental problem in geometry is to determine the isometry group 
of a given manifold with a geometric structure. From a dynamical point
of view, an even more interesting problem is to determine for a given
Lie group $G$ the manifolds with geometric structures that admit an
action by isometries from $G$. Particularly interesting problems arise
when we assume $G$ to be a semisimple Lie group as it has been shown in
the work of Adams, Feres, Katok, Spatzier and Zimmer, among others.

For semisimple Lie groups, the existence of actions preserving geometric  
structures impose strong restrictions on the manifolds that admit such
actions. In a sense, this can be considered an extension of Margulis'
superrigidity theorem, since any action defines a representation of
the group into the diffeomorphism group of the manifold. However, the
techniques used to prove such restrictions are somehow more
complicated. 

When studying group actions it is very useful to distinguish those
satisfying suitable conditions. In this work we want to focus on
actions satisfying what is known as an engagement condition, with
particular emphasis on topologically and geometrically engaging
actions (see Definitions~\ref{def-top-eng} and
\ref{def-geom-eng1}). For any such restriction to be useful we need it 
to have two important features: 1) The condition must allow to obtain
interesting properties or apply known tools. 2) The condition must be
satisfied by most actions under study or it must be a consequence of
natural geometric/dynamic hypothesis. Theorems~\ref{thm-sz} and
\ref{thm-cq} are just two examples that show that topological and
geometric engagement satisfy the first feature, and plenty of other
results found in the references below provide more instances.

On the other hand, it turns out that topological engagement is
satisfied for actions that preserve a connection and a finite volume,
which is a pretty natural geometric condition. Even though this
statement is claimed to be true in several of the references below
there is no complete proof of this fact up to this date. The main goal
of this work is to provide such a proof to completely settle this
basic property of topologically engaging actions.

With respect to the organization of the article, in section 2 we
define the engaging conditions discussed above and describe some of
their applications. In section 3 we develop some of the basic lemmas
needed in our proof that come from a result of Gromov known as the
centralizer theorem. Section 4 contains the main result of this work,
Theorem~\ref{thm-main}, where we prove that for simple noncompact Lie
groups every analytic action preserving a unimodular rigid geometric
structure of algebraic type (among which we have the structures
consisting of a connection and a smooth measure) is topologically
engaging on an open conull dense set.  Finally, in section 5 we make
some observations that are brought to light from the ideas in the
proof of our main result.

The authors are grateful to Robert Zimmer for the enlightening remarks
that allowed to develop the ideas presented in this work.

\section{Actions and engagement conditions}

In the rest of this article the (Lie group) actions of structure
groups of fiber bundles are assumed to be on the right, except for the
actions of fundamental groups on universal covers which we assume to
be on the left. All other actions are assumed to be on the left as
well.

A standard technique used to study actions of Lie groups is to
require certain dynamical conditions to be fulfilled. For the case of
semisimple Lie groups of noncompact type it has been found that it is 
particularly useful to consider restrictions on the actions lifted to
coverings of the manifold being acted upon. In order to be more
precise we state without proof the following standard result:

\begin{prop}
Let $M$ be a connected manifold acted upon by a connected Lie group
$G$, then for any covering $\pi \colon M' \rightarrow M$ there is an
action of $\wt{G}$, the universal covering of $G$, that commutes with
the covering transformations and for which the covering maps are
equivariant, in other words we have $\pi(gm) = \pi_0(g)\pi(m)$ for all
$g\in \wt{G}$ and $m\in M'$, where $\pi_0 \colon \wt{G} \rightarrow G$
is the canonical covering map.
\end{prop}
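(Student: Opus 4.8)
The plan is to realize the desired $\wt G$-action as the unique lift of a suitable map through the covering $\pi$, and then to deduce every required property (the action axioms, commutation with deck transformations, and equivariance) from uniqueness of lifts. Write $a\colon G\times M\to M$, $(g,m)\mapsto gm$, for the given action, and let $\pi_0\colon \wt G\to G$ be the canonical projection. Since $\pi_0\times\pi\colon \wt G\times M'\to G\times M$ is again a covering map, I would consider the smooth map $f=a\circ(\pi_0\times\pi)\colon \wt G\times M'\to M$, $(g,m')\mapsto \pi_0(g)\,\pi(m')$, and lift it through $\pi$ to a map $\wt a\colon \wt G\times M'\to M'$ with $\pi\circ\wt a=f$. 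Note that this last identity is exactly the asserted equivariance $\pi(gm')=\pi_0(g)\pi(m')$, so that property comes for free from the construction.

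First I would establish existence and uniqueness of this lift. One may assume $M'$ is connected: a covering of the connected manifold $M$ is a disjoint union of connected coverings, and since $\wt G$ is connected the orbit map $g\mapsto\wt a(g,m')$ keeps each orbit inside a single component, so the construction is carried out one component at a time. Fix base points $m_0\in M$ and $m'_0\in\pi^{-1}(m_0)$, and write $\tilde e\in\wt G$ for the identity. The domain $\wt G\times M'$ is a connected, locally path-connected manifold, so by the lifting criterion for coverings a lift $\wt a$ with $\wt a(\tilde e,m'_0)=m'_0$ exists and is unique provided $f_*\big(\pi_1(\wt G\times M',(\tilde e,m'_0))\big)\subseteq \pi_*\big(\pi_1(M',m'_0)\big)$. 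Because $\wt G$ is simply connected, $\pi_1(\wt G\times M')\cong\pi_1(M')$, and every class is represented by a loop of the form $(\text{const}_{\tilde e},\ell)$; on such a loop $f$ restricts to $s\mapsto a(e,\pi(\ell(s)))=\pi(\ell(s))$, so $f_*=\pi_*$ on $\pi_1(M')$ and the image is precisely $\pi_*(\pi_1(M'))$. The criterion holds, and since $\pi$ is a local diffeomorphism the continuous lift $\wt a$ is automatically smooth.

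It then remains to check that $\wt a$ is an action commuting with deck transformations, and each verification is a one-line application of uniqueness of lifts over a connected domain. For the identity axiom, both $m'\mapsto\wt a(\tilde e,m')$ and $\mathrm{id}_{M'}$ lift $\pi$ and agree at $m'_0$, hence coincide. For associativity, the two maps $\wt G\times\wt G\times M'\to M'$ given by $(g,h,m')\mapsto\wt a(gh,m')$ and $(g,h,m')\mapsto\wt a(g,\wt a(h,m'))$ are both lifts of $(g,h,m')\mapsto\pi_0(g)\pi_0(h)\pi(m')$ (here one uses that $\pi_0$ is a homomorphism and that $a$ is an action on $M$), and they agree at $(\tilde e,\tilde e,m'_0)$, so they are equal. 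Finally, if $\gamma$ is any deck transformation of $\pi$, then $\gamma\circ\wt a(g,\cdot)$ and $\wt a(g,\cdot)\circ\gamma$ are both lifts of $(g,m')\mapsto\pi_0(g)\pi(m')$ that agree at $(\tilde e,m'_0)$, hence coincide, giving the desired commutation.

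The only genuinely substantive step is the fundamental-group computation underlying the lifting criterion; everything else is forced by uniqueness. The point worth isolating is that the hypothesis that $\wt G$ is the \emph{universal} cover of $G$ is exactly what makes $f_*$ factor through $\pi_*$, so that a lift exists for an \emph{arbitrary} covering $\pi$ of $M$; this is why the statement is phrased for $\wt G$ rather than for $G$ itself. I would also be careful to record the (routine) reduction to connected $M'$, since that is the one place where a possibly disconnected covering needs a comment.
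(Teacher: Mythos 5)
Your proof is correct. Note that the paper itself offers no proof to compare against---it explicitly states this proposition ``without proof'' as a standard result---and your argument is exactly the standard one: lift $a\circ(\pi_0\times\pi)$ through $\pi$ via the lifting criterion (where simple connectedness of $\wt{G}$ reduces $f_*$ to $\pi_*$ on $\pi_1(M')$, which is the key point you correctly isolate), then derive the action axioms and commutation with deck transformations from uniqueness of lifts over a connected domain; your reductions to connected $M'$ and the use of the already-established identity axiom in the deck-transformation step are both handled properly.
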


If $G$ is a simple Lie group of noncompact type acting on a compact
manifold $M$, then the orbits are typically very complicated, unless
the action is trivial. However, it might be the case that after
lifting the action, as in the previous proposition, the orbits have
more manageable features. The following result (see \cite{rZ84})
provides a fundamental example to consider. We recall that a measure
preserving action is ergodic if the only measurable invariant sets are 
either null or conull (see \cite{rZ84} for more details).

\begin{prop}
\label{prop-moore}
Let $H$ be a simple noncompact Lie group and $\Gamma$ a lattice in $H$. 
If $G$ is noncompact closed subgroup of $H$, then the action of $G$ on
$H/\Gamma$ is ergodic with respect to any bi--invariant
measure on $H$.
\end{prop}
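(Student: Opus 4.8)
The plan is to deduce this ergodicity statement from Moore's theorem on the vanishing at infinity of matrix coefficients of unitary representations of simple noncompact Lie groups (see \cite{rZ84}). Essentially all of the dynamical content is concentrated in that theorem, so the argument reduces to a short formal manipulation once the right unitary representation is in place. First I would record that, since $\Gamma$ is a lattice, the bi-invariant (Haar) measure on $H$ descends to a \emph{finite} $H$-invariant measure $\mu$ on $H/\Gamma$, which we normalize to a probability measure. Let $\pi$ be the associated unitary representation of $H$ on $L^2(H/\Gamma,\mu)$, given by $(\pi(g)f)(x)=f(g^{-1}x)$; invariance of $\mu$ under left translation makes each $\pi(g)$ unitary. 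Because the measure is finite, ergodicity of the $G$-action is equivalent to the assertion that every $G$-invariant function $f\in L^2(H/\Gamma)$ is almost everywhere constant (invariant sets correspond to invariant indicator functions, which lie in $L^2$). Writing $L^2(H/\Gamma)=\mathbb{C}\oplus L^2_0$, where $L^2_0$ consists of the functions of zero mean, I note that $H$ acts transitively on $H/\Gamma$, so the only $H$-invariant functions are the constants; hence $\pi$ restricted to $L^2_0$ has no nonzero $H$-invariant vectors.

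The key step is then as follows. Suppose $f\in L^2_0$ is $G$-invariant, so $\pi(g)f=f$ for every $g\in G$, and consider the matrix coefficient $\varphi(g)=\langle \pi(g)f,f\rangle$. By $G$-invariance, $\varphi(g)=\|f\|^2$ for all $g\in G$. Since $G$ is a noncompact \emph{closed} subgroup, I can choose a sequence $g_n\in G$ leaving every compact subset of $G$; as $G$ is closed in $H$, the $g_n$ also leave every compact subset of $H$, i.e.\ $g_n\to\infty$ in $H$. Applying Moore's vanishing theorem to $\pi|_{L^2_0}$, which has no nonzero $H$-invariant vectors, yields $\varphi(g_n)=\langle\pi(g_n)f,f\rangle\to 0$. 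Comparing this with $\varphi(g_n)=\|f\|^2$ forces $\|f\|=0$, so $f=0$. Thus the only $G$-invariant $L^2$-functions are constants, and the $G$-action is ergodic.

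The hard part, were one to prove the proposition from first principles rather than quote it, is establishing Moore's decay of matrix coefficients; everything above it is formal. The standard route is through the Mautner phenomenon: using the representation theory of $\mathrm{SL}_2(\mathbb{R})$ and a unipotent one-parameter subgroup inside $H$, one shows that a vector fixed by a suitable one-parameter group is in fact fixed by successively larger subgroups, and then, via the structure theory of $H$ (for instance the $KAK$ decomposition), that nonvanishing of a matrix coefficient along a sequence tending to infinity would produce a genuine $H$-invariant vector. A secondary technical point is the reduction to the connected, finite-center case to which Moore's theorem applies in its classical form; this is harmless here, since passing to the identity component $H^0$ does not change the $\sigma$-algebra of invariant sets, and $G\cap H^0$ has finite index in $G$ and so remains noncompact. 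I therefore expect no serious obstruction beyond invoking the Howe--Moore theorem correctly in the stated generality.
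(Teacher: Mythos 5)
Your argument is correct and is exactly the standard proof of Moore's ergodicity theorem that the paper invokes by citation: the paper itself gives no proof but refers to Zimmer's book \cite{rZ84}, where ergodicity is deduced in precisely this way from the vanishing of matrix coefficients (Howe--Moore), using the decomposition $L^2(H/\Gamma)=\mathbb{C}\oplus L^2_0$ and the fact that a closed noncompact $G$ contains sequences tending to infinity in $H$. The only caveat, which you already flag, is that the vanishing theorem requires finite center (your passage to the identity component handles connectedness but not an infinite center), an assumption implicit in the paper's conventions and in \cite{rZ84}.
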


The action of $G$ on $H/\Gamma$ is very complicated for $G$
a proper subgroup as above, but such action can be lifted so that $G$
(not just its universal cover $\wt{G}$) acts on $H$ with closed orbits
that define a quotient $G \backslash H$ that has an analytic manifold
structure. We also remark that in the above result we can take $H$ to
be a semisimple noncompact Lie group as long as we require $\Gamma$ to
be an irreducible lattice (see \cite{rZ84}). Trying to capture this
special behavior, Zimmer has introduced in \cite{rZ89} the following
notion:

\begin{defn}
\label{def-top-eng}
A smooth action of a connected Lie group $G$ on a manifold $M$ is
called topologically engaging if there is some $\tilde{g}\in\wt{G}$
that acts tamely on $\wt{M}$ (i.e., has locally closed orbits) and
that projects to an element $g\in G$ that does not lie in a compact
subgroup. 
\end{defn}

\begin{rmk}
Observe that the action considered in the previous proposition is
topologically engaging. Also notice that for an action to be
topologically engaging it is enough for the action of $\wt{G}$ on
$\wt{M}$ to have locally closed orbits.
\end{rmk}

\begin{rmk}
For most applications it is enough for the above condition to hold on
suitable open subsets, so we will say that the action is topologically
engaging on a $G$--invariant open subset $U\subset M$ if there is a
$\tilde{g}\in\wt{G}$ whose orbits in $\wt{U}$ (the inverse image of
$U$ in $\wt{M}$) are locally closed and that projects to an element
$g\in G$ that does not lie in a compact subgroup. Notice that since
$U$ is open the locally closed condition does not depend on whether it
is being taken with respect to $\wt{U}$ or $\wt{M}$.
\end{rmk}

For a dynamical condition as topological engagement to be useful it
has to hold for an important family of actions and also it has to have 
interesting consequences. As remarked before, the main goal of this
work is to show that most ``geometric'' actions are topologically
engaging.
On the other hand, in \cite{SZ91} and \cite{rZ89} it has been
shown that topological engagement ensures rigid behavior for actions
of semisimple Lie groups of noncompact type. In particular, the
following result is the main step in the proof of Theorem A in
\cite{SZ91}:

\begin{theorem}
\label{thm-sz}
Let $G$ be a connected noncompact simple Lie group with finite center,
finite fundamental group and $\mathbb{R}$-$\rank(G)\geq 2$. Let $M$ be
a compact manifold and suppose that there is a topologically engaging
action of $G$ on $M$ preserving a finite measure. Then $\pi_1(M)$ is
not isomorphic to the fundamental group of any complete Riemannian
manifold $N$ with negative curvature bounded away from $0$ and
$-\infty$. 
\end{theorem}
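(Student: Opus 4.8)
The plan is to argue by contradiction, exploiting the fact that the fundamental group of a manifold with pinched negative curvature is ``rank one'' in a strong sense and therefore cannot absorb the dynamics of a higher rank simple group. Suppose then that $\Gamma := \pi_1(M)$ is isomorphic to $\pi_1(N)$ for a complete $N$ with $-b^2 \le K \le -a^2 < 0$. Passing to universal covers, $\wt{N}$ is a Hadamard manifold on which $\Gamma$ acts freely, properly discontinuously, and by isometries as the deck group, so $\Gamma$ acts on the ideal boundary $B := \partial\wt{N}$, a sphere, as a convergence group. By Myers--Steenrod, $H := \mathrm{Isom}(\wt{N})$ is a Lie group in which $\Gamma$ sits discretely. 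The geometric input I would extract from the pinching is threefold: a Preissmann-type rigidity (every amenable subgroup of $\Gamma$ is virtually cyclic, equivalently the $\Gamma$-stabilizer of any probability measure on $B$ with support of at least three points is finite); amenability of the $\Gamma$-action on $B$ in the sense of Zimmer; and the absence, in a strictly negatively curved space, of any totally geodesic flat of dimension $\ge 2$.

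Next I would build the relevant cocycle. The universal cover $\wt{M}$ carries commuting actions of $\Gamma$ (the deck group) and of $\wt{G}$ (as in the first proposition of Section~2), so the covering $\wt{M}\to M$ is encoded by a measurable holonomy cocycle $\alpha \colon G \times M \to \Gamma \subset H$ over the finite-measure-preserving $G$-action on $(M,\mu)$. The decisive use of the engaging hypothesis enters here: I would use the tame element $\tilde{g}\in\wt{G}$, whose orbits in $\wt{M}$ are locally closed and which projects to $g\in G$ outside every compact subgroup, together with Poincar\'e recurrence for $g$ on $(M,\mu)$, to show that $\alpha$ is \emph{not} cohomologous to a cocycle with relatively compact image. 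Intuitively, local closedness prevents the $\Gamma$-holonomy accumulated along recurrent $g$-orbits from being absorbed into the compact directions, so the essential range of $\alpha$ is genuinely noncompact.

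With this in hand I would run the superrigidity machinery, which is where $\R$-$\rank(G)\ge 2$ is essential. Let $L\subset H$ be the algebraic hull of $\alpha$ and split off its amenable radical. If the hull has a noncompact semisimple part, Zimmer's cocycle superrigidity forces $\alpha$ to be cohomologous to a cocycle induced from a nontrivial continuous homomorphism $\pi\colon G \to H$; since $G$ is simple with $\R$-$\rank(G)\ge 2$, the noncompact image $\pi(G)$ would act isometrically on $\wt{N}$ and produce, via a maximal split torus, an isometrically and totally geodesically embedded flat of dimension $\ge 2$, contradicting $K<0$. If instead the hull is amenable, then $\alpha$ reduces to an amenable, hence virtually elementary, subgroup of $\Gamma$; amenability of the $\Gamma$-action on $B$ yields a $G$-invariant measurable section $\phi\colon M \to \mathrm{Prob}(B)$ with $\phi(gx)=\alpha(g,x)_*\phi(x)$, and the Preissmann-type dichotomy (elementary stabilizers of boundary measures) forces the essential range of $\alpha$ to be relatively compact. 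Either alternative contradicts the noncompactness established from engagement, so the isomorphism $\pi_1(M)\cong\pi_1(N)$ cannot hold.

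The step I expect to be the main obstacle is the conversion carried out in the second paragraph: turning the purely topological hypothesis ``$\tilde{g}$ has locally closed orbits and $g$ is noncompact'' into the measure-theoretic conclusion that the holonomy cocycle has noncompact (indeed non-elementary) essential range. Local closedness gives a well-behaved quotient of $\wt{M}$ by the $\tilde{g}$-dynamics, but one must show that the deck transformations recorded by the returns of the $g$-flow cannot be simultaneously conjugated into a common elementary subgroup of $\Gamma$, and this requires a careful interplay between the tame quotient, the finiteness of $\mu$ (hence recurrence, with ergodic-component bookkeeping), and the convergence dynamics on $B$. The remaining ingredients play clearly delimited supporting roles: higher rank supplies the superrigidity that annihilates the semisimple alternative, while the strict pinching of $N$ is precisely what rules out higher-rank flats and makes every boundary-measure stabilizer elementary.
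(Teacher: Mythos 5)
The paper never proves Theorem~\ref{thm-sz}: it is quoted, without proof, as the main step of Theorem A of \cite{SZ91}, so your proposal has to be measured against the Spatzier--Zimmer argument. Your overall skeleton --- holonomy cocycle $\alpha\colon G\times M\to\Gamma$, amenability of the $\Gamma$-action on $B=\partial\wt{N}$ producing an equivariant map $\phi\colon M\to\mathrm{Prob}(B)$, pinching forcing stabilizers of boundary measures to be elementary, and engagement ruling out a bounded cocycle --- is genuinely the right one. But your third paragraph contains a structural error: the dichotomy ``algebraic hull with noncompact semisimple part versus amenable hull,'' followed by Zimmer cocycle superrigidity producing a homomorphism $\pi\colon G\to H=\mathrm{Isom}(\wt{N})$ and a flat, is not available. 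The cocycle $\alpha$ takes values in the \emph{discrete} group $\Gamma$, and for a generic pinched negatively curved $N$ the group $\mathrm{Isom}(\wt{N})$ is itself essentially discrete --- it is not an algebraic group and contains no semisimple part to feed into superrigidity, nor is $\Gamma$ linear a priori, so the notion of algebraic hull does not apply as you use it. What replaces this in \cite{SZ91} is the measure dichotomy on the boundary sphere under the convergence action: if $\phi(x)$ has support with at least three points on a conull set, its stabilizer in $\mathrm{Isom}(\wt{N})$ is compact (barycenter construction in pinched negative curvature), hence finite in $\Gamma$; if the support has at most two points, $\alpha$ reduces to an elementary, hence amenable, subgroup of $\Gamma$, and then one invokes property (T) of $G$ --- this is where $\R$-$\rank(G)\geq 2$ actually enters, via Kazhdan's property rather than via flats --- to conclude that a cocycle of a Kazhdan group into an amenable group is cohomologous to one with values in a compact, here finite, subgroup. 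Either way $\alpha$ is cohomologous to a finite-valued cocycle.

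The second problem is the one you flagged yourself and then left open, and it cannot be left open because it is precisely the content of the engagement hypothesis. The standard way to close it: if $\alpha$ is cohomologous to a cocycle into a finite subgroup $F\subset\Gamma$, one obtains a measurable $\wt{G}$-invariant reduction of the principal $\Gamma$-bundle $\wt{M}\to M$ to $F$, hence a finite $\wt{G}$-invariant measure on the intermediate finite cover (equivalently, on $\wt{M}$ after accounting for the $F$-cosets). Now take the element $\tilde{g}\in\wt{G}$ furnished by topological engagement: it preserves a finite measure, so Poincar\'e recurrence applies, and a homeomorphism with \emph{locally closed} orbits all of whose points are recurrent must have compact (periodic-like) orbit closures; this forces the projection $g\in G$ to lie in a compact subgroup, contradicting the definition of engagement. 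Your proposal gestures at ``local closedness prevents the holonomy from being absorbed into compact directions'' but never executes this recurrence-versus-tameness argument, and without it the proof does not distinguish engaging from non-engaging actions, for which the conclusion fails. So: right architecture in the amenable branch, but the superrigidity branch should be deleted in favor of the boundary-measure dichotomy plus property (T), and the engagement step must be proved, not assumed.
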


In \cite{CQ99} we have introduced a dynamical condition for actions of 
semisimple Lie groups similar to Zimmer's topological engagement. Such 
condition comes from a particular way of measuring how the orbits in
the universal cover stretch out to infinity. The latter is more
precisely stated in the following:

\begin{defn}  
\label{def-geom-eng1}         
	Let $G$ be a connected semisimple Lie group of noncompact
	type, let $X$ be the symmetric space of noncompact type
	associated to $G$ and let $M$ be a compact Riemannian manifold  
	acted upon by $G$. Choose a Cartan decomposition   
	$\mathfrak{g} = \mathfrak{k} \oplus \mathfrak{m}$ for the Lie  
	algebra of $G$ (with $\mathfrak{k}$ a maximal compact subalgebra),
	and let $\mathfrak{m}_1$ be the unit ball
	in $\mathfrak{m}$ with respect to the Killing form of $\mathfrak{g}$.
	For $v \in \mathfrak{m}_1$ denote with  
	$g_t^v = \exp(tv)$ the one--parameter subgroup of $\widetilde{G}$  
	generated by $v$. The pointwise stretch of the action of $G$   
	on $M$ is the function defined by:  
	\begin{gather}  
	\pstre(G,M) \colon\mathfrak{m}_1\times \widetilde{M}
		\rightarrow \mathbb{R}  \notag \\   
	      (v,  x) \mapsto   
			\liminf_{t\to\infty}   
			\frac{d_{\widetilde{M}}(g_t^v x, x)}{t}     \notag  
	\end{gather}  
	where $\widetilde{G}$ acts on $\widetilde{M}$ by an arbitrary
	but fixed lift 
	of the action of $G$ on $M$. We say that the action of $G$ on $M$  
	has positive stretch if $\pstre(G,M)$ is a positive function.   
\end{defn}  

\begin{rmk}
Notice that the pointwise stretch of an action does not depend on the
choice of the lifted action to $\wt{M}$. Also notice that the pointwise 
stretch does depend on the choice of the Cartan decomposition of
$\mathfrak{g}$ and the Riemannian metric on $M$. However, it is easily 
seen that the condition of having positive stretch does not depend on
either of them. 
\end{rmk}

\begin{rmk}
In the definition of pointwise stretch, the distance
$d_{\widetilde{M}}(g_t^v x, x)$ measures the stretching of the orbit
of $x$ with respect to the one--parameter subgroup $g_t^v$ being
considered. On the other hand, from the basic theory of symmetric
spaces (see \cite{sH78}), if $x_0\in X$ is the point fixed by the
subgroup generated by $\mathfrak{k}$, then $t\mapsto g_t^v x_0$ is a
unit speed geodesic, and so we have $d_X(g_t^v x_0, x_0)
= t$. It follows that the limit that defines $\pstre(G,M)(v,x)$
compares as $t\rightarrow \infty$ the stretching out to infinity of
the orbits of $x$ with the stretching out to infinity of the geodesics 
in the space $X$. In particular, when the pointwise stretch is
positive, the orbits on $\wt{M}$ stretch out to infinity at least as
fast as they do in the symmetric space $X$.
\end{rmk}

\begin{rmk}
It is easily seen that an action as above has positive pointwise
stretch if and only if for every one--parameter subgroup $g_t$ of
$\wt{G}$ which does not map into a compact subgroup of $\Ad(G)$ we
have:
\begin{equation*}
	\liminf_{t\to\infty}   
	\frac{d_{\widetilde{M}}(g_t x, x)}{t} > 0
\end{equation*}
for every $x\in \wt{M}$. From this it is an easy matter to show that
an action with pointwise positive stretch is topologically engaging.
\end{rmk}

The notion of pointwise positive stretch is a natural translation to
actions of the notion of stretch considered in \cite{mG91} and
\cite{rQ97} for foliations. However, to obtain rigidity type results
for actions the following slightly stronger notion is needed:

\begin{defn} 
\label{def-geom-eng}        
	With the notation as in the previous definition,
	we say that the action of $G$ on $M$ is geometrically engaging 
	if for every sequence $(g_n)_n$ in $G$ such that $(g_nx_0)_n$ 
	is a quasi--ray in $X$ (the symmetric space associated to $G$) 
	for some (and hence any) $x_0 \in X$, the limit inferior:
	\begin{equation*}  
	\liminf_{n\to\infty} 
		\frac{d_{\widetilde{M}}(g_n x, x)}{d_X(g_n x_0, x_0)} > 0
	\end{equation*}  
	for every $x \in \widetilde{M}$.
\end{defn}

\begin{rmk}
Recall that a sequence $x_n$ in $X$ is called a quasi--ray if there
exist constants $A>0$ and $B\geq 0$ such that:
\begin{equation*}
	A^{-1}|m-n|-B \leq d_X(x_n,x_m) \leq  A|m-n|+B
\end{equation*}
for all $m,n\geq 0$.
\end{rmk}

\begin{rmk}
Observe that the condition of geometric engagement does not depend on
the choice of $x_0\in X$ or the Riemannian metric on $M$. However, the 
actual value of the limit inferior may depend on such choices.
\end{rmk}

\begin{rmk}
Since a geodesic in a symmetric space can always be seen as an orbit
by a one--parameter subgroup, it is easily shown that a geometrically
engaging action has positive pointwise stretch and so it is
topologically engaging as well.
\end{rmk}

Since geometric engagement is a stronger condition than topological
engagement it is expected to provide stronger rigidity type results,
which is the case as the following theorem shows (see
\cite{CQ99}):

\begin{theorem}
\label{thm-cq}
Let $G$ be a connected noncompact simple Lie group whose associated
symmetric space $X$ either has rank at least $2$ or is a
quaternionic or Cayley hyperbolic space and let $N$ be a compact
Riemannian manifold with nonpositive sectional curvature when $X$ has
rank $\geq 2$ and with nonpositive complexified sectional curvature
otherwise. Suppose $G$ has a geometrically engaging action on a
compact manifold $M$. If $\pi_1(M)\cong\pi_1(N)$, then there is an
isometric totally geodesic immersion $X \rightarrow N$. In particular, 
for $\rank(X)\geq 2$, the space $M$ cannot have the fundamental group
of a compact manifold with strictly negative sectional curvature.
\end{theorem}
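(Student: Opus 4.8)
The plan is to transfer the stretching information encoded in geometric engagement from $\widetilde{M}$ to the universal cover $\widetilde{N}$ and then to invoke harmonic map rigidity. Write $\Gamma = \pi_1(M) \cong \pi_1(N)$, so that $\Gamma$ acts by deck transformations on $\widetilde{M}$ and, through the given isomorphism, on $\widetilde{N}$, which is a Hadamard manifold since $N$ is compact and nonpositively curved. First I would fix a smooth map $f_0 \colon M \to N$ realizing the isomorphism on fundamental groups and lift it to a $\Gamma$--equivariant map $f \colon \widetilde{M} \to \widetilde{N}$; because $M$ and $N$ are compact, $f$ is coarsely Lipschitz. The lifted $\widetilde{G}$--action on $\widetilde{M}$ commutes with $\Gamma$, so the orbit foliation of $\widetilde{M}$ is the natural object along which to read off the behavior of $f$.

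The role of geometric engagement is to guarantee nondegeneracy. Restricting to a one--parameter subgroup $g_t^v$ with $v \in \mathfrak{m}_1$, the orbit $t \mapsto g_t^v x$ is, by Definition~\ref{def-geom-eng} together with the uniform Lipschitz upper bound coming from smoothness of the action on the compact manifold $M$, a quasi--geodesic in $\widetilde{M}$ whose endpoints track the geodesic $t \mapsto g_t^v x_0$ in $X$. More generally, pushing a maximal flat of $X$ (or a single geodesic, in the rank one case) through the orbit map yields a quasi--isometric embedding into $\widetilde{M}$. Applying $f$ transports these into $\widetilde{N}$: the upper bound is automatic from coarse Lipschitzness, while the lower bound — the genuinely quasi--isometric nature — is exactly what Definition~\ref{def-geom-eng} provides and is the feature that distinguishes the present situation from a merely topologically engaging action, where the orbit directions could collapse.

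With this nondegenerate equivariant structure in hand I would take the harmonic map $h \colon M \to N$ homotopic to $f_0$, which exists and is essentially unique by the Eells--Sampson theorem since the domain is compact and the target has nonpositive curvature, and lift it to a $\Gamma$--equivariant harmonic map $\widetilde{h} \colon \widetilde{M} \to \widetilde{N}$. The analysis then proceeds along the $\widetilde{G}$--orbit foliation: geometric engagement forces $\widetilde{h}$ not to collapse the orbit directions (the quasi--isometric lower bound above), so its restriction to a generic orbit is a nondegenerate map of $X$ into $\widetilde{N}$. A leafwise Bochner--Weitzenb\"ock computation, using nonpositivity of the sectional curvature of $N$ when $\rank(X) \geq 2$ and nonpositivity of the complexified sectional curvature in the quaternionic or Cayley case — this is precisely the hypothesis required in the Matsushima--Corlette--Mok--Siu--Yeung vanishing theorems — forces the second fundamental form of $\widetilde{h}$ along the orbit to vanish. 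Hence the restriction of $\widetilde{h}$ to an orbit is totally geodesic, and after normalizing the metric it is isometric, descending to the desired isometric totally geodesic immersion $X \to N$.

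Finally, the ``in particular'' clause follows from the incompatibility of such an immersion with strict negativity of the curvature. If $\rank(X) \geq 2$, then $X$ contains a totally geodesic flat $\mathbb{R}^2$, whose image would be a flat $2$--plane isometrically and totally geodesically embedded in $N$; but a compact manifold of strictly negative sectional curvature has a visibility Hadamard cover containing no flat $2$--planes, a contradiction. Thus no such $N$ exists, and $M$ cannot carry the fundamental group of a compact strictly negatively curved manifold. I expect the main obstacle to be the careful conversion of the asymptotic lower bound of Definition~\ref{def-geom-eng} into the precise nondegeneracy (noncollapsing of orbit directions and the associated energy control) needed to run the vanishing argument, and the verification that the leafwise totally geodesic pieces assemble into a single globally defined immersion of $X$; this is exactly the step where geometric rather than merely topological engagement is indispensable.
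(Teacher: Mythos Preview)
The paper does not contain a proof of this theorem: it is stated as a quotation of the main result of \cite{CQ99} and is included only to illustrate that geometric engagement yields stronger rigidity conclusions than topological engagement. There is therefore no proof in the present paper against which to compare your proposal.

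That said, your outline is broadly the correct strategy and is in the spirit of \cite{CQ99}: one transports the $\Gamma$--equivariant data to $\widetilde{N}$, replaces the initial map by a harmonic one via Eells--Sampson, and then uses a Bochner/Matsushima--type vanishing (Corlette in the quaternionic/Cayley case, Mok--Siu--Yeung type arguments in higher rank) along the orbit foliation, with geometric engagement supplying the nondegeneracy that prevents the harmonic map from collapsing the orbit directions. The honest gaps you flag at the end --- turning the asymptotic lower bound of Definition~\ref{def-geom-eng} into genuine energy/nondegeneracy control for the harmonic map, and assembling the leafwise totally geodesic pieces into a single immersion of $X$ --- are precisely the substantive analytic work carried out in \cite{CQ99}, and your sketch does not yet supply those arguments. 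One further point to be careful about: the harmonic map produced is from $M$, not from $X$, so one must explain why the restriction to an orbit (which is a copy of $G$ or a quotient thereof, not of $X$) descends to, or factors through, a map of the symmetric space $X$ itself; this uses that the $K$--directions are compact and hence cannot contribute to the stretch, but it is a step that needs to be made explicit.
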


\section{Gromov's centralizer theorem}

From a dynamical point of view, for an action to be more manageable it 
is desirable to have some sort of invariant structure on the manifold
being acted upon. As it is shown by Gromov in \cite{mG88}, for
superrigidity results, the most natural kind of structures that one can
consider are the rigid geometric structures. We will start this section by 
reviewing some of the basic definitions and notation used to describe these
geometric structures and we refer to \cite{mG88} and \cite{sK95} for further
details. 

Let $\Gl^{(k)}(n)$ be the Lie group of $k$--jets of diffeomorphisms of
$\R^n$ fixing the origin; this group is easily seen to be an algebraic
group.  Recall that a $k$--frame of an $n$-dimensional manifold $M$ at
a point $m\in M$ is the $k$--jet of a local diffeomorphism $(\R^n,0)
\rightarrow (M,m)$.
Then for such manifold $M$ there is a $\Gl^{(k)}(n)$--principal fiber
bundle $L^{(k)}(M)$, called the $k$--th order frame bundle, which
consists of the $k$--frames of $M$. In particular, for $k=1$ the group
$\Gl^{(1)}(n)$ is the usual general linear group and $L^{(1)}(M)$ is
the usual linear frame bundle of $M$. Let $Q$ be a smooth manifold that
admits a smooth action of $\Gl^{(k)}(n)$ and denote with $E^Q$ the
fiber bundle associated to $L^{(k)}(M)$ with standard fiber $Q$; then
a geometric structure on $M$ of order $k$ and type $Q$ is a smooth
section of $E^Q$, and such structure is called of algebraic type if
$Q$ is a real algebraic variety and the $\Gl^{(k)}(n)$--action is
algebraic as well. Every diffeomorphism $\phi$ of $M$ induces
corresponding bundle diffeomorphisms for both $L^{(k)}(M)$ and $E^Q$,
and for a geometric structure of order $k$ and type $Q$ we will say
that $\phi$ is an isometry or automorphism if the corresponding
section of $E^Q$ is equivariant with respect to such diffeomorphisms
induced by $\phi$. A smooth vector field on $M$ is called a Killing
vector field for a given geometric structure if its local flow acts by
local automorphisms for the structure.

A geometric structure $\omega$ of order $k$ and type $Q$ on $M$ is
called unimodular if for each $m \in M$ the $\Gl^{(k)}(n)$--orbit in
$Q$ of $\omega(m)$ has stabilizers whose images in $\Gl^{(1)}(n) =
\Gl(n)$ under the natural jet projection $\Gl^{(k)}(n) \rightarrow
\Gl^{(1)}(n)$ are contained in the group of matrices with determinant
$\pm 1$. It is easily seen that such structure defines a reduction of
the linear frame bundle $L^{(1)}(M)$ to the group of matrices with
determinant $\pm 1$, and so induce a volume density on $M$.

For a structure $\omega$ of order $k$ and type $Q$, $l\geq k$ and $x,y 
\in M$ we define $\Aut^{(l)}(\omega,x,y)$ to be the set of $l$--jets
of diffeomorphisms of $M$ taking $x$ to $y$ and $\omega(x)$ to
$\omega(y)$ up to order $l$; any such jet is called an infinitesimal
automorphism of $\omega$. We also denote $\Aut^{(l)}(\omega,m) =
\Aut^{(l)}(\omega,m,m)$, which is clearly a group.

Notice that whenever a manifold has a geometric structure there is a
corresponding geometric structure on its universal cover for which the
covering map is a local isometry and the fundamental group acts by
isometries as well. For such setup we will denote both geometric
structures with the same symbol.

\begin{defn}
A geometric structure $\omega$ is called $k$--rigid if for each $m\in
M$ and $l\geq k$, the natural jet projection map $\Aut^{(l)}(\omega,m) 
\rightarrow \Aut^{(k)}(\omega,m)$ is injective. A geometric structure
is called rigid if it is $k$--rigid for some $k$.
\end{defn}

\begin{rmk}
For $k$--rigid geometric structures the infinitesimal automorphisms at 
a point are completely determined by its $k$--jet at that point.
\end{rmk}

\begin{rmk}
It is easy to show that pseudoRiemannian metrics are rigid structures
of order $1$ and that affine connections are rigid structures of order
$2$. Also both are structures of algebraic type. Moreover, any finite
type structure in the sense of Cartan (see
\cite{sK95}) is rigid.
\end{rmk}

\begin{rmk}
Notice that all definitions and constructions above can be performed
replacing the smooth maps and manifolds by analytic ones, and that the 
corresponding remarks and properties mentioned above still hold true.
\end{rmk}

Gromov has extensively studied in \cite{mG88} the properties of rigid
structures that relate to the superrigid behavior of actions of
semisimple Lie groups. A very important result of such study is the
abundance of Killing vector fields for suitable rigid geometric
structures that commute with the action of a simple Lie group that
preserve any such structure. More precisely, we have the following
result that appears as Corollary 4.3 in \cite{rZ93}:

\begin{theorem}
\label{zimmer-centralizer}
Suppose $G$ is a noncompact simple Lie group with finite center acting
analytically and non trivially on a compact manifold $M$ preserving an 
analytic unimodular,
rigid, structure $\omega$ of algebraic type. Identify the Lie algebra
$\mathfrak{g}$ of $G$ with a Lie algebra of globally defined Killing
fields on $\wt{M}$ via the action of $M$. Let $\mathfrak{z}$ be the
centralizer of $\mathfrak{g}$ in the Lie algebra of globally defined
Killing vector fields on $\wt{M}$. For $x\in \wt{M}$, let
$\mathfrak{z}(x)$ and $\mathfrak{g}(x)$ be the images of
$\mathfrak{z}$ and $\mathfrak{g}$ respectively under the evaluation
map at $x$. Then for a.e.~$x\in \wt{M}$ we have
$\mathfrak{z}(x)\supset\mathfrak{g}(x)$. 
\end{theorem}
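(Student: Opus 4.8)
The plan is to reduce the statement to a question about a single finite-dimensional representation of $G$ and then to resolve that question by means of the Borel density theorem. First I would invoke Gromov's theory of rigid analytic structures: since $\wt{M}$ is simply connected and $\omega$ is analytic and rigid, every local Killing field extends, by analytic continuation with rigidity guaranteeing single-valuedness, to a globally defined Killing field on $\wt{M}$, and the resulting Lie algebra $W$ of global Killing fields is finite dimensional. By construction $\g \subseteq W$ and $\mathfrak{z}$ is the centralizer of $\g$ in $W$. On an open conull set $\wt{M}_0$ the dimension of the local Killing algebra is locally constant and maximal, so there the germs of Killing fields at $x$ are exactly the restrictions of elements of $W$, and I would restrict attention to $\wt{M}_0$.

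Since $G$ preserves $\omega$, pushforward of vector fields gives a linear action $\rho$ of $G$ (equivalently of $\wt{G}$) on $W$ whose differential is the bracket $X \mapsto [X,\,\cdot\,]$; restricted to $\g$ this is the adjoint representation. Because $G$ is simple and noncompact, $W$ is a completely reducible $G$-module, $\mathrm{Ad}$ is irreducible and nontrivial, and consequently $\mathfrak{z} = W^{G}$ is precisely the trivial isotypic component while $\g$ is an irreducible (adjoint) summand with $\g \cap \mathfrak{z} = 0$. This identifies the centralizer intrinsically as the $G$-fixed part of $W$, which is what allows representation theory to interact with the dynamics.

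The evaluation maps $\ev_x \colon W \to T_x\wt{M}$ are equivariant in the cocycle sense, $\ev_{gx}\circ \rho(g) = dg_x\circ \ev_x$, so $x \mapsto \ker \ev_x$ defines a measurable, $G$-equivariant assignment into the Grassmannian $\mathrm{Gr}(W)$, and likewise $\g(x)$ and $\mathfrak{z}(x)$ are $G$-invariant measurable distributions. The key step is to show that $\ker \ev_x$ is a $G$-submodule of $W$ for almost every $x$. I would obtain this from the Borel density theorem: the finite invariant volume on $M$, available because $\omega$ is unimodular, together with the commuting deck action of $\Gamma = \pi_1(M)$ on $W$, lets one regard $x \mapsto \ker \ev_x$ as a $G$-invariant measurable section of a bundle over $M$ with algebraic fiber $\mathrm{Gr}(W)$ and algebraic structure group, to which the density theorem for the simple noncompact group $G$ applies and forces the section into the $\rho(G)$-fixed locus.

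Granting that $\ker \ev_x$ is a submodule, complete reducibility writes $W = \mathfrak{z} \oplus C$ with $\g$ an irreducible summand of the nontrivial part $C$; projecting along $\mathfrak{z}$, the inclusion $\g \subseteq \mathfrak{z} + \ker \ev_x$ is equivalent to $\g \subseteq \pi_C(\ker \ev_x)$, and since $\g$ is irreducible the $G$-submodule $\pi_C(\ker \ev_x)$ either contains $\g$, giving $\g(x) \subseteq \mathfrak{z}(x)$ as desired, or meets it trivially. I expect the main obstacle to lie exactly here and in the density step feeding it: ruling out the degenerate alternative amounts to showing that the induced $G$-map $\g \to T_x\wt{M}/\mathfrak{z}(x)$, $X \mapsto X(x) \bmod \mathfrak{z}(x)$, vanishes rather than embedding a copy of the adjoint module into the tangent bundle, and this is the delicate point where one must use the finiteness of the invariant measure and the precise algebraic hull of the derivative cocycle of the unimodular structure, rather than formal equivariance alone. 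The measure-theoretic passage between the infinite invariant measure on $\wt{M}$ and the finite one on $M$ is the other place where care is required.
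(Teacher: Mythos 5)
The paper does not actually prove this statement: it is quoted as Corollary 4.3 of \cite{rZ93}, so the benchmark is the Gromov--Zimmer argument given there and in \cite{mG88}. Measured against that, your proposal contains a pivotal step that is not merely unproved but false: the claim that $\ker\ev_x \subset W$ is a $\rho(G)$-submodule for almost every $x$. The paper's own model example refutes it. Take $M = G/\Gamma$ with $\Gamma$ a cocompact lattice and the bi-invariant metric, so that (up to coverings) $\wt{M} = G$, $W = \{X^R + Y^L : X, Y \in \mathfrak{g}\}$ with $X^R$, $Y^L$ the right- and left-invariant fields, $\rho(g)(X^R + Y^L) = (\mathrm{Ad}(g)X)^R + Y^L$, and $\mathfrak{z} = W^G$ is the algebra of left-invariant fields. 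Here $\ker\ev_x = \{X^R - (\mathrm{Ad}(x^{-1})X)^L : X \in \mathfrak{g}\}$, the graph of $-\mathrm{Ad}(x^{-1})$; bracketing with $Z^R$ annihilates the left-invariant component, so $[Z^R, \ker\ev_x] \subset \ker\ev_x$ would force $[Z,X] = 0$ for all $Z, X \in \mathfrak{g}$, which is absurd for $\mathfrak{g}$ simple. Thus $\ker\ev_x$ is a submodule at \emph{no} point, even though the theorem's conclusion holds at every point. The reason Borel density cannot deliver your claim is structural: the finite invariant measure lives on $M$, but $\ker\ev_x$ is defined only on $\wt{M}$ and descends merely to a $G$-invariant section of the flat bundle $(\wt{M} \times \mathrm{Gr}(W))/\Gamma$; for sections of such $\Gamma$-twisted bundles the correct conclusion is invariance under the algebraic hull of the associated cocycle, not membership in the $\rho(G)$-fixed locus, and the example above shows the latter conclusion is genuinely false.

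Even if one granted the submodule claim, you concede the decisive point yourself: the branch $\mathfrak{g} \cap \pi_C(\ker\ev_x) = 0$ of your dichotomy is left open, deferred to ``the precise algebraic hull of the derivative cocycle.'' That is exactly where the content of the theorem lives, and it is where the cited proof actually works: one studies, for a.e.\ $x$, the isotropy action on the finite-dimensional algebraic space of $k$-jets of Killing fields, and uses unimodularity together with the Borel density theorem to show that the Zariski closure of the isotropy jet group contains a copy of $\mathrm{Ad}(G)$; this produces a subalgebra $\tilde{\mathfrak{g}}_x$ of Killing jets vanishing at $x$ with $[\tilde{X}, Y] = [X,Y]$ for $Y \in \mathfrak{g}$, so that the jets $X - \tilde{X}$ centralize $\mathfrak{g}$ infinitesimally while having value $X(x)$ at $x$; finally $k$-rigidity and analytic extension of local Killing fields on the simply connected $\wt{M}$ upgrade these jets to globally defined fields in $\mathfrak{z}$. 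Your first two steps (global extension and finite dimensionality of $W$; $\mathfrak{z} = W^G$ and complete reducibility) are sound, but your framework contains no mechanism that manufactures centralizing fields: as it stands, the argument reformulates the conclusion rather than proving it.
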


As an immediate corollary of this we have the following result. From
now on, given a manifold $M$ and a point $x\in M$ we will denote with
$\ev_x$ the evaluation map at $x$ for vector fields on $M$, and we
define $\wt\ev_x$ similarly for $\wt{M}$.

\begin{theorem}[Gromov's centralizer theorem]
Let $G$ be a simply connected, simple, noncompact Lie group with
finite center acting  on a compact manifold $M$ via
analytic diffeomorphisms, and preserving a unimodular, rigid, analytic
structure $\omega$ of algebraic type. Denote with $\GG$ the Lie
algebra of Killing vector fields on $\wt{M}$ induced by the
$G$--action.
 
Let $\VV$ denote the collection of all analytic vector fields $X\in
\X(\wt{M})$ such that
\begin{itemize}
	\item $X$ centralizes $\GG$, and
	\item $X$ is a Killing field for $\omega$.
\end{itemize}
Then:
\begin{itemize}
	\item[\upn{(}1\upn{)}] $\VV$ is $\pi_1(M)$-invariant. 
	\item[\upn{(}2\upn{)}] $\VV$ is finite dimensional.
	\item[\upn{(}3\upn{)}] $\VV$ centralizes $\GG$.
	\item[\upn{(}4\upn{)}] $T_x Gx \subset\wt{\ev}_x(\VV)$ 
			   for a.e.~$x\in\wt{M}$.
\end{itemize}
\end{theorem}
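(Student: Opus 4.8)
The plan is to recognize that $\VV$ is nothing other than the centralizer algebra $\mathfrak{z}$ of Theorem~\ref{zimmer-centralizer}, so that the nontrivial content of the statement is carried by that theorem. Since $\omega$ is analytic, every Killing field for $\omega$ on $\wt{M}$ is analytic, and hence $\VV$ is exactly the set of Killing fields on $\wt{M}$ that commute with every element of $\GG$; under the identification $\GG = \g$ this is the definition of $\mathfrak{z}$. Part~(3) is then immediate: by construction each element of $\VV$ commutes with each element of $\GG$, so $\VV$ centralizes $\GG$.

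For part~(4), I would first identify the tangent space to the orbit. The differential at the identity of the orbit map $g\mapsto gx$ sends $v\in\g$ to the value at $x$ of the generating Killing field, so that $T_xGx=\wt{\ev}_x(\GG)=\g(x)$. Applying Theorem~\ref{zimmer-centralizer} with the identification $\VV=\mathfrak{z}$ gives $\mathfrak{z}(x)\supset\g(x)$ for a.e.~$x\in\wt{M}$, which reads $\wt{\ev}_x(\VV)\supset T_xGx$ for a.e.~$x$, as required.

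For part~(2), I would invoke rigidity. By the remark following the definition of rigidity, an infinitesimal automorphism of the $k$-rigid structure $\omega$ is determined by its $k$-jet at a point; the same therefore holds for a Killing field, through its local flow by automorphisms. Fixing a base point $x_0\in\wt{M}$, if a Killing field has vanishing $k$-jet at $x_0$ it vanishes near $x_0$ by rigidity, and then everywhere by analyticity and connectedness of $\wt{M}$; thus the $k$-jet map embeds the full Killing algebra into the finite-dimensional space of $k$-jets at $x_0$, so this algebra, and a fortiori its subspace $\VV$, is finite dimensional. For part~(1), I would use that a deck transformation $\gamma\in\pi_1(M)$ acts on $\wt{M}$ as an isometry of the lifted structure $\omega$ and, as recalled in Section~2, commutes with the $G$-action on $\wt{M}$. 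The first fact implies $\gamma_*$ carries Killing fields to Killing fields; the second, on differentiating the commuting flows, gives $\gamma_*Y=Y$ for every $Y\in\GG$. Hence for $X\in\VV$ the field $\gamma_*X$ is Killing and satisfies $[\gamma_*X,Y]=[\gamma_*X,\gamma_*Y]=\gamma_*[X,Y]=0$ for all $Y\in\GG$, so $\gamma_*X\in\VV$ and $\VV$ is $\pi_1(M)$-invariant.

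I expect the only genuinely substantive step to be part~(4), which is a direct consequence of the centralizer theorem once the orbit tangent space is identified with $\g(x)$; the finite-dimensionality in part~(2) is the one place where rigidity of $\omega$ is essential, while parts~(1) and~(3) are formal consequences of the definition of $\VV$ and the compatibility of deck transformations with both $\omega$ and the $G$-action.
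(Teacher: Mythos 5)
Your proposal is correct and takes essentially the same route as the paper: parts (1) and (3) as formal consequences of the definition of $\VV$ and the commuting lifted actions, part (2) from finite-dimensionality of the Killing algebra of a rigid structure (which the paper simply cites to Kobayashi, while you sketch the jet-injectivity argument), and part (4) by identifying $T_xGx$ with $\mathfrak{g}(x)$ and applying Theorem~\ref{zimmer-centralizer}.
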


\begin{proof}
Conclusion $(1)$ follows from the fact that the actions of $\pi_1(M)$
and $G$ lifted to $\wt{M}$ commute, and also uses that $\omega$ in
$\wt{M}$ is $\pi_1(M)$--invariant. Conclusion $(2)$ follows from the
fact that a rigid structure has a finite dimensional Lie algebra of
Killing vector fields (see \cite{sK95} for a proof of this
claim in the case of finite type structures). Conclusion $(3)$ is immediate 
from the definition of $\VV$.

The only nontrivial claim is given by $(4)$, which follows from 
Theorem~\ref{zimmer-centralizer} since $\mathfrak{g}(x) = T_x Gx$.
\end{proof}

This remarkable result essentially states that, with the given
hypotheses, whenever the Lie algebra of Killing fields for the
geometric structure contains $\mathfrak{g}$ (the Lie algebra of $G$),
through the action of $G$, then there a whole new vector space of
Killing fields that in virtue of conclusion $(4)$ somehow still
contains $\mathfrak{g}$ and commutes with the original space of
Killing fields. A simple but important example to have in mind is the
action of $G$ as above on $G/\Gamma$, where $\Gamma$ is a
cocompact lattice. In this case we can take the geometric structure to
be the bi--invariant pseudoRiemannian metric on $G$ coming from the
Killing form and observe that the $G$--action lifts to the left
action of $G$ on itself so that $\GG$ is the Lie algebra of right
invariant vector fields and $\VV$ is the Lie algebra of left
invariant vector fields.

\begin{rmk}
Observe that the hypotheses of Gromov's centralizer theorem are
satisfied if we assume that the action leaves invariant an affine
connection and a volume density as long as we assume both to be
analytic. 
\end{rmk}

In the rest of the article we will assume that $M$ is a compact
manifold acted upon by a simple Lie group $G$ so that the hypotheses
of Gromov's centralizer theorem are satisfied. We will also denote with
$\Gamma$ the fundamental group of $M$.

\begin{rmk}
Since $\VV$ as above is $\Gamma$--invariant, Gromov's centralizer
theorem provides a representation $\rho \colon \Gamma \rightarrow
\Gl(\VV)$ that can be used to understand some of the properties of
$\Gamma$. As an example, in the following theorem due to Gromov (see
\cite{mG88} and \cite{rZ93}) the representation of $\Gamma$ is the one
provided by Gromov's centralizer theorem.
\end{rmk}

\begin{theorem}
\label{gromov-rep}
Let $M$ be a compact manifold acted upon by a simple Lie group $G$
satisfying the hypotheses of Gromov's centralizer theorem. Then there
is a representation $\rho \colon \Gamma \rightarrow \Gl(q)$ for some
$q$ such that the Zariski closure of $\rho(\Gamma)$ contains a group
locally isomorphic to $G$.
\end{theorem}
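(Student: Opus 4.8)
The plan is to combine Gromov's centralizer theorem (in the form just stated) with a representation-variety argument. The centralizer theorem hands us a finite-dimensional space $\VV$ of Killing fields on $\wt M$ that centralizes $\GG$ and is $\Gamma$-invariant; setting $q = \dim\VV$ gives the representation $\rho\colon\Gamma\to\Gl(\VV)\cong\Gl(q)$. So the representation in the statement is already produced for free, and the entire content is the claim about the Zariski closure $\overline{\rho(\Gamma)}$. The strategy is to exhibit $G$ (or a group locally isomorphic to it) acting on $\VV$ in a way that is compatible with $\rho(\Gamma)$ and forces the Zariski closure to be large.

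First I would make $G$ act on the relevant space of Killing fields. Both $\GG$ (the image of $\g$) and $\VV$ live inside the Lie algebra of all Killing fields of $\omega$ on $\wt M$, which by rigidity is finite-dimensional, and $G$ acts on this algebra by the adjoint action $\mathrm{Ad}(g)\colon X\mapsto g_*X$ coming from the $G$-action on $\wt M$. Because $\VV$ centralizes $\GG$ and $\GG$ is $\mathrm{ad}$-generated by $\g$, the subspace $\VV$ is preserved by this $G$-action (the centralizer of an invariant subspace is invariant), so we get a linear representation $\sigma\colon G\to\Gl(\VV)$. The key point is then the interaction between $\sigma(G)$ and $\rho(\Gamma)$: since the $G$- and $\Gamma$-actions on $\wt M$ commute, $\sigma(g)$ and $\rho(\gamma)$ commute in $\Gl(\VV)$ for all $g\in G$, $\gamma\in\Gamma$. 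Thus $\sigma(G)$ lies in the centralizer of $\rho(\Gamma)$, and dually $\rho(\Gamma)$ normalizes (in fact centralizes) $\sigma(G)$; this is what will let us transfer largeness from $\sigma(G)$ to the Zariski closure of $\rho(\Gamma)$.

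Next I would exploit conclusion $(4)$ of the centralizer theorem, $T_xGx\subset\wt\ev_x(\VV)$, to show that $\sigma(G)$ is a nontrivial, hence (since $G$ is simple) a locally faithful, image of $G$. Conclusion $(4)$ guarantees that for a.e.\ $x$ the orbit tangent space $T_xGx$ is hit by evaluating $\VV$, which prevents $\sigma$ from being trivial: a nontrivial $G$-action on $\wt M$ moves generic orbits, and the $G$-action on $\VV$ records how the Killing fields in $\VV$ are dragged along these orbits. Because $G$ is simple, any nontrivial finite-dimensional representation has image locally isomorphic to $G$, so $\sigma(G)\subset\Gl(\VV)$ is a subgroup locally isomorphic to $G$. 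Finally, combining commutation with a Borel-density or double-centralizer argument, one shows $\sigma(G)\subset\overline{\rho(\Gamma)}$: the crucial input is that $\Gamma$ is a lattice-like object whose Zariski closure must contain the full connected group commuting with its own centralizer, so the copy of $G$ realized by $\sigma$ is forced into $\overline{\rho(\Gamma)}$.

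\emph{The hard part} will be the last step, namely showing that $\sigma(G)$ is actually contained in $\overline{\rho(\Gamma)}$ rather than merely commuting with it. Commutation alone places $\sigma(G)$ in the centralizer of $\rho(\Gamma)$, which is typically disjoint from $\overline{\rho(\Gamma)}$; one must instead use the geometry of $\wt M$ and the density of $\Gamma$-orbits (ergodicity, via Proposition~\ref{prop-moore}-type arguments) to argue that the $G$-action on $\VV$ is approximated by, or absorbed into, the $\Gamma$-action in the Zariski topology. This is precisely the delicate point that Gromov treats in \cite{mG88}; I would isolate it as the single nontrivial lemma and invoke the cited representation-variety machinery rather than reprove it.
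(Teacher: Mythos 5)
There is a fatal flaw at the heart of your proposal: the representation $\sigma\colon G\to\Gl(\VV)$ you construct is trivial. Every $X\in\VV$ centralizes $\GG$, i.e.\ $[X,Y]=0$ for every Killing field $Y$ induced by the $G$-action; hence the local flow of each such $Y$ preserves $X$, and since $G$ is connected this gives $g_*X=X$ for all $g\in G$. This is not a side issue that conclusion $(4)$ can rescue: the paper itself states and uses exactly this fact (``By conclusion $(3)$ of Gromov's centralizer theorem, the action of $G$ on $\VV$ is trivial, i.e.~$gX=X$'') in the proof that $\ev\colon E^\VV\to TM$ is $G$-equivariant. Your appeal to conclusion $(4)$ conflates two different things: $(4)$ says the evaluation maps $\wt\ev_x$ carry $\VV$ onto a superset of the orbit tangent spaces $T_xGx$, which constrains the joint geometry of $\VV$ and the orbits, but it says nothing about $G$ moving elements of $\VV$ --- it does not, precisely by $(3)$. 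So the subgroup of $\Gl(\VV)$ locally isomorphic to $G$ that your argument needs simply does not exist, and the concluding double-centralizer step has nothing to apply to. (A secondary problem: $\Gamma=\pi_1(M)$ is not a priori a lattice in any ambient group, so Borel-density arguments for ``lattice-like objects'' are not available in the form you invoke; and, as you yourself observe, commutation alone would place $\sigma(G)$ in the centralizer of $\rho(\Gamma)$, which is the wrong side of the fence.)

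For comparison: the paper does not prove this theorem at all --- it attributes it to Gromov, citing \cite{mG88} and \cite{rZ93}, and the only content it supplies is the remark, which your opening paragraph correctly reproduces, that $\rho$ is the representation of $\Gamma$ on the $\Gamma$-invariant finite-dimensional space $\VV$, so $q=\dim\VV$. The mechanism in the cited proofs is genuinely different from your outline: rather than seeking a $G$-action on $\VV$ (trivial), one exploits conclusion $(4)$ --- that the centralizing Killing fields evaluate onto the orbit directions at a.e.\ point --- together with the facts that $\rho(\Gamma)$ acts by automorphisms of the Lie algebra structure on the centralizer and that the action preserves a finite measure, to produce an embedding of $\g$ into the Lie algebra of the Zariski closure $\overline{\rho(\Gamma)}$; the group locally isomorphic to $G$ arises \emph{inside} $\overline{\rho(\Gamma)}$ from that embedding, not as an external commuting copy later ``absorbed''. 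If your intent is to defer the hard step to \cite{mG88} and \cite{rZ93} --- which is exactly what the paper does --- then the construction of $\sigma$ should be deleted entirely: it is both incorrect and plays no role in the cited argument.
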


Such result imposes strong restrictions on the fundamental group of
$M$. For example, using a theorem of Moore (see \cite{rZ84}) it
follows that the fundamental group of $M$ cannot be amenable.
In the basic theory of Lie groups it is a well known fact that the
semisimple Lie groups of noncompact type provide a family of groups
which is completely disjoint from the family of amenable groups, and
every Lie group is built out of both families; more precisely, every
connected semisimple amenable Lie group is compact and every connected
Lie group is (up to a covering) the semidirect product of an amenable
group and a semisimple Lie group of noncompact type. Then the previous
result states that (under suitable restrictions) the fundamental group
of a manifold acted upon by a simple noncompact Lie group $G$ lies in
the same sort of family that contains $G$.

\section{Topological engagement}

In this section we will prove that actions satisfying the hypotheses
of Gromov's centralizer theorem are topologically engaging. Our main
tool will be the representation considered in
Theorem~\ref{gromov-rep}. In this section $M$ will denote a manifold
acted upon by $G$ satisfying the hypotheses of Gromov's centralizer
theorem, $\Gamma$ will denote the fundamental group of $M$ and $\rho$
the representation defined in Theorem~\ref{gromov-rep}.

Consider the product left action of $\Gamma$ on $\VV\times \wt{M}$
which defines a manifold $\Gamma\backslash(\VV\times\wt{M})$ that
fibers as a vector bundle over $M$. We will denote by $E^\VV$ the
total space of this vector bundle. It is easily seen that the
principal frame bundle $L(E^\VV)$ associated to $E^\VV$ is canonically
isomorphic to $\Gamma\backslash(\Gl(\VV)\times\wt{M})$. Notice that by
conclusion $(2)$ from Gromov's centralizer theorem, the space $\VV$ is
finite dimensional and so $\Gl(\VV)$ is a finite dimensional Lie
group. Also observe that the evaluation map defines an analytic vector
bundle map $\ev \colon E^\VV \rightarrow TM$. Since $G$ is simply
connected its left action on $M$ lifts to an action on the principal
bundle $\wt{M} \rightarrow M$ that commutes with the (left) action of
$\Gamma$. Being $E^\VV$ a fiber bundle associated to $\wt{M}
\rightarrow M$ there is an induced left action of $G$ on the bundles
$E^\VV \rightarrow M$ and $L(E^\VV) \rightarrow M$ by bundle
automorphisms, and it is easily checked that such actions are given by
$g[X,m] = [X,gm]$, where (for the case of $E^\VV$) $X\in \VV$, $m\in
\wt{M}$ and $g\in G$, with a similar expression for $L(E^\VV)$.

\begin{lemma}
The vector bundle map $\ev \colon E^\VV \rightarrow TM$ is
$G$--equivariant with respect to the actions induced on $E^\VV$ and
$TM$ as bundles associated to the principal bundle $\wt{M} \rightarrow
M$.
\end{lemma}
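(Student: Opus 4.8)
The plan is to verify the equivariance by computing directly on representatives, using the explicit formulas for the $G$-actions on $E^\VV$ and $TM$ that were just recorded in the text. Recall that the $G$-action on $E^\VV$ is given by $g[X,m] = [X,gm]$ for $X\in\VV$, $m\in\wt M$, $g\in G$. The $G$-action on $TM$ is induced in the analogous way from the lifted $G$-action on $\wt M$: since $G$ acts on $\wt M$ by the fixed lift, and the covering $\pi\colon\wt M\to M$ is $G$-equivariant (with $G$ simply connected, the lift covers the $G$-action on $M$ downstairs), the tangent action on $TM$ is just $g_*$, the differential of the action of $g\in G$ on $M$.

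First I would unwind the evaluation map $\ev\colon E^\VV\to TM$ on a representative. By definition $\ev[X,m] = d\pi_m\bigl(X(m)\bigr)$, the image under the covering projection of the value at $m\in\wt M$ of the Killing field $X\in\VV$. I then need to show
\begin{equation*}
\ev\bigl(g[X,m]\bigr) = g_*\,\ev[X,m],
\end{equation*}
i.e.\ that $\ev[X,gm] = g_*\,d\pi_m(X(m))$. Evaluating the left side gives $d\pi_{gm}\bigl(X(gm)\bigr)$, so after using the $G$-equivariance of $\pi$ to commute $d\pi$ past $g_*$, the whole statement reduces to the single identity
\begin{equation*}
X(gm) = g_*\,X(m)
\end{equation*}
for every $X\in\VV$, every $m\in\wt M$ and every $g\in G$, where here $g_*$ denotes the differential on $\wt M$ of the action of $g$. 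This says precisely that each vector field $X\in\VV$ is invariant under the lifted $G$-action on $\wt M$.

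The heart of the argument, then, is establishing this $G$-invariance of the fields in $\VV$, and this is where the centralizer hypothesis does the work. The Lie algebra $\g$ of $G$ is identified with the Killing fields $\GG$ on $\wt M$ generated by the action, and by construction every $X\in\VV$ centralizes $\GG$. Infinitesimally, $X$ being invariant under the one-parameter subgroup $\exp(t\xi)$ generated by $\xi\in\g$ is equivalent to the vanishing of the Lie bracket $[\,\xi^{\wt M},X\,]$, where $\xi^{\wt M}\in\GG$ is the corresponding Killing field; this vanishing is exactly the centralizing condition. I would therefore argue that for each $\xi$, the flow $\phi_t = \exp(t\xi)$ satisfies $\ddt\bigl((\phi_t)_* X\bigr) = -(\phi_t)_*[\,\xi^{\wt M},X\,] = 0$, so $(\phi_t)_* X = X$ for all $t$; since $G$ is connected (indeed simply connected) and generated by such one-parameter subgroups, $X$ is invariant under all of $G$, which is the identity $X(gm)=g_*X(m)$ needed above.

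The main obstacle is purely bookkeeping: one must be careful that the lifted $G$-action on $\wt M$ genuinely covers the action on $M$ (this uses simple connectivity of $G$, so that $\wt G = G$) and that the differential of $\pi$ intertwines the two tangent actions, so that pushing $\ev$ through the $G$-action downstairs matches pushing it through the lifted action upstairs. Once the sign convention relating the Lie bracket to the infinitesimal flow derivative is fixed consistently, the reduction above is routine, and the equivariance follows. I expect no genuine analytic difficulty here—the content is entirely contained in the centralizer property of $\VV$, with the rest being the naturality of the associated-bundle and tangent-bundle constructions with respect to the commuting $G$- and $\Gamma$-actions on $\wt M$.
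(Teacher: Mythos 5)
Your proof is correct and follows essentially the same route as the paper: both reduce the equivariance of $\ev$ to the $G$--invariance of every field $X\in\VV$ (the naturality identity $d\phi(X_m)=d\phi(X)_{\phi(m)}$ being the bookkeeping step), and both obtain that invariance from conclusion $(3)$ of Gromov's centralizer theorem. The only difference is one of detail: the paper treats ``$\VV$ centralizes $\GG$ implies $gX=X$ for all $g\in G$'' as immediate, while you spell out the intermediate flow argument ($[\xi^{\wt{M}},X]=0$ forces $(\phi_t)_*X=X$, and connectedness of $G$ extends this from one--parameter subgroups to all of $G$), which is a legitimate filling-in rather than a different approach.
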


\begin{proof}
Given a diffeomorphism $\phi$ of $M$ and  a vector field $X$ on $M$
we clearly have $d\phi(X_m) = d\phi(X)_{\phi(m)}$ for every $m\in M$.
From this it follows that the evaluation map $\ev$ is
$G$--equivariant with respect to the natural action of $G$ on $TM$ and
the action of $G$ on $E^\VV$ given by $g[X,m] = [gX,gm]$, where $X\in
\VV$ and $gX$ denotes the vector field obtained by letting the
diffeomorphism defined by $g$ act on $X$. By conclusion $(3)$ of
Gromov's centralizer theorem, the action of $G$ on $\VV$ is trivial,
i.e.~$gX = X$ for all $X\in \VV$ and $g\in G$. Then $\ev$ is
$G$--equivariant for the action of $G$ on $E^\VV$ as associated bundle
of $\wt{M} \rightarrow M$.
\end{proof}

A very useful property of simple Lie groups is that their actions are
essentially locally free. More precisely we have the following result
that appears as Corollary 3.6 from \cite{rZ93} (see also
\cite{rZ86}). 

\begin{theorem}
Suppose $G$ is a simple noncompact Lie group acting non trivially on a
manifold $M$ with a finite invariant smooth measure. Then there is an
open dense conull $G$--invariant set for which the stabilizers are discrete.
\end{theorem}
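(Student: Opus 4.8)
The plan is to control the Lie algebra $\g_x$ of the stabilizer $G_x$ by means of the evaluation map $\ev_x \colon \g \to T_x M$, $X \mapsto X(x)$, whose kernel is exactly $\g_x$; thus $G_x$ is discrete if and only if $\ev_x$ is injective. The assignment $x \mapsto \g_x$ is $G$-equivariant for the adjoint action on the target, since $G_{gx} = g G_x g^{-1}$ yields $\g_{gx} = \Ad{(g)} \g_x$. Viewing $\g_x$ as a point in the disjoint union of Grassmannians $\coprod_k \operatorname{Gr}_k(\g)$---on which $G$ acts algebraically through $\Ad{}$ and, via the Pl\"ucker embeddings, as subvarieties of projective spaces---this gives a measurable $G$-equivariant map $\phi \colon M \to \coprod_k \operatorname{Gr}_k(\g)$. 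First I would push the finite invariant measure $\mu$ forward under $\phi$ to obtain a finite $G$-invariant measure $\nu = \phi_\ast \mu$ on this projective variety.

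The core of the argument is a Borel density / Furstenberg type theorem: for $G$ simple and noncompact (hence with no compact factors), any finite $G$-invariant measure on a projective space is supported on the $G$-fixed points (see \cite{rZ84}). Applying this on each Grassmannian component, $\nu$ is concentrated on the $\Ad{(G)}$-invariant subspaces of $\g$. A subspace $W \subseteq \g$ is $\Ad{(G)}$-invariant precisely when it is an ideal---differentiating $\Ad{(\exp tX)}$ gives $[\g, W] \subseteq W$---and since $\g$ is simple the only ideals are $0$ and $\g$. Hence $\phi(x) \in \{0, \g\}$ for $\mu$-almost every $x$; equivalently, almost every stabilizer is either discrete ($\g_x = 0$) or all of $G$ ($\g_x = \g$, i.e.\ $x$ is fixed).

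It remains to upgrade this almost-everywhere dichotomy to the asserted open dense conull set, and this is the step I expect to be the main obstacle. Consider $U = \{x : \ev_x \text{ is injective}\} = \{x : \rank \ev_x = \dim \g\}$, on which every stabilizer is discrete. Since the action fields depend smoothly on the base point and the rank is lower semicontinuous, maximal rank is an open condition, so $U$ is open; it is $G$-invariant because discreteness of the stabilizer is conjugation invariant. To conclude I must show $U$ is conull, for which it suffices to rule out positive measure for the fixed set $F = \{x : \g_x = \g\}$, the only other possibility left by the dichotomy. Here I would invoke nontriviality together with analyticity and connectedness of $M$: the fixed set of the connected group $G$ is then a proper analytic subset, hence nowhere dense and $\mu$-null, the smooth invariant measure being in the Lebesgue class with full support. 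Then $U$ is conull, and an open set of full measure for a measure of full support is automatically dense; in particular $U \neq \emptyset$, so the maximal rank really is $\dim \g$. This produces the required open, dense, conull, $G$-invariant set of points with discrete stabilizer. I note that without analyticity the statement can fail---for instance, for the disjoint union of a trivial action and a locally free one---so a hypothesis beyond finiteness of the measure is genuinely used at this last step.
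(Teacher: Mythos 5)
The paper offers no internal proof of this statement: it is quoted as Corollary 3.6 of \cite{rZ93} (see also \cite{rZ86}), so the only comparison available is with the standard argument behind that citation, and your proposal is essentially that argument. The core steps are all correct: $\ker \ev_x = \g_x$ by uniqueness of ODE solutions; the equivariant Gauss-type map $x \mapsto \g_x$ into the Grassmannians of $\g$; Borel density forcing the pushforward of the finite invariant measure onto $\Ad{(G)}$-fixed subspaces; the identification of fixed subspaces with ideals, so that simplicity yields $\g_x \in \{0, \g\}$ almost everywhere; and lower semicontinuity of rank making the locally free locus $U$ open and $G$-invariant. Your final step --- ruling out positive measure for the fixed set $F$ --- is indeed the delicate point, and you are right that the bare statement needs hypotheses beyond finiteness of the measure. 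Two refinements are in order. First, your disjoint-union counterexample shows that \emph{connectedness} of $M$ (or nontriviality on each component) is what is really needed, not analyticity as such: each piece of that example can be taken real analytic, so it does not separate the smooth from the analytic category. Second, in the smooth connected category the nullity of $F$ is not obvious (a priori the common zero set of the induced vector fields could have positive measure and empty interior), which is presumably why the cited result lives in the real-analytic framework of Gromov's theory --- precisely the setting in which the present paper invokes it, since Gromov's centralizer theorem, Corollary~\ref{cor-free}, and Theorem~\ref{thm-main} all assume analytic actions on a connected compact manifold. Under those hypotheses your appeal to properness of the analytic fixed set, hence its nullity and nowhere density, closes the argument correctly, and your concluding observation that an open conull set is dense for a full-support smooth measure is exactly right.
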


\begin{rmk}
We recall that an action with discrete stabilizers is called locally
free and it is called essentially locally free if it is locally free
on a conull set.
\end{rmk}

A direct application of Frobenius theorem provides the following:

\begin{cor}
\label{cor-free}
Suppose $G$ is a simple noncompact Lie group acting non trivially on a
manifold $M$ with a finite invariant smooth measure. Then on an open
conull dense $G$--invariant subset $U_0$ of $M$ the $G$--orbits define
a smooth foliation and the tangent spaces to the orbits define a
smooth subbundle $T\OO$ of the tangent bundle $TU_0$. Moreover, if the
action is analytic, the foliation and $T\OO$ are analytic as well.
\end{cor}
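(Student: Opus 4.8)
The plan is to combine the essential local freeness supplied by the preceding theorem with the classical Frobenius theorem applied to the fundamental vector fields of the action. First I would fix the open dense conull $G$-invariant set on which the stabilizers are discrete and call it $U_0$; since $G$ is connected, every orbit contained in $U_0$ is a connected immersed submanifold. The action induces a homomorphism $\mathfrak{g} \to \X(M)$, $v \mapsto X_v$, where $X_v$ is the fundamental vector field $X_v(m) = \ddt\big|_{t=0}\exp(tv)m$; these fields are analytic whenever the action is analytic. The key elementary observation is that discreteness of the stabilizer at $x\in U_0$ is equivalent to the vanishing of the isotropy subalgebra, which in turn says that the evaluation $v \mapsto X_v(x)$ is injective. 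Hence, fixing a basis $v_1,\dots,v_n$ of $\mathfrak{g}$, the fields $X_{v_1},\dots,X_{v_n}$ are pointwise linearly independent on $U_0$ and their span at each $x$ is exactly $T_x Gx$.

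From this I would define $T\OO$ to be the distribution on $U_0$ spanned by $X_{v_1},\dots,X_{v_n}$. Since these are smooth (resp.\ analytic) vector fields that are everywhere linearly independent on $U_0$, $T\OO$ is a smooth (resp.\ analytic) subbundle of $TU_0$ of constant rank $n = \dim G$ whose fiber over $x$ is the tangent space to the orbit through $x$. Involutivity is automatic: because $v \mapsto X_v$ is a homomorphism one has $[X_{v_i},X_{v_j}] = \pm X_{[v_i,v_j]}$, which again lies in the span of $X_{v_1},\dots,X_{v_n}$, so $[T\OO,T\OO]\subset T\OO$. The Frobenius theorem, in its analytic version when the action is analytic, then yields an integrable foliation of $U_0$ tangent to $T\OO$.

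The remaining point, which I expect to be the only step needing genuine care, is to identify the leaves of this foliation with the $G$-orbits rather than with arbitrary integral manifolds. Each orbit $Gx$ is a connected integral manifold of $T\OO$ of the full rank $n$, hence it is contained in, and open in, the leaf $L_x$ through $x$. Conversely, every point $y$ of $L_x$ lies in $U_0$ and its orbit $Gy$ is a connected integral manifold through $y$, so $Gy\subset L_y = L_x$; thus $L_x$ is partitioned by the orbits it contains, each of which is open in $L_x$. Since $L_x$ is connected and these open pieces are pairwise disjoint, there can be only one, giving $L_x = Gx$. Therefore the leaves are precisely the $G$-orbits, which establishes both the foliation and the subbundle claims, in the analytic category when the action is analytic.
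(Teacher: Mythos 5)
Your proposal is correct and follows exactly the route the paper intends: the paper gives no written proof beyond the remark that the corollary is ``a direct application of Frobenius theorem'' to the preceding essential-local-freeness result, and your argument is precisely that application with the implicit details made explicit (injectivity of $v \mapsto X_v(x)$ at points with discrete stabilizer, involutivity via $[X_v,X_w]=\pm X_{[v,w]}$, and the connectedness argument identifying leaves with orbits). The leaf-equals-orbit step, which you rightly flag as the one needing care, is handled correctly by observing that each orbit is an open integral manifold of full rank inside its leaf, so the leaf is partitioned into disjoint open orbits and connectedness forces a single one.
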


This result allows us to improve the conclusions of Gromov's
centralizer theorem. 

\begin{lemma}
\label{gromov2}
Consider a group $G$ acting on a manifold $M$ as in Gromov's
centralizer theorem.  Then there is an open conull dense subset
$\wt{U}$ of $\wt{M}$, which is both $G$ and $\Gamma$--invariant, such
that conclusion $(4)$ from Gromov's centralizer theorem holds for
every $x\in \wt{U}$. Moreover, $G$ acts locally freely on $\wt{U}$.
\end{lemma}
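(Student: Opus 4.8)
The plan is to produce $\wt U$ as the intersection of two open conull dense, $G$-- and $\Gamma$--invariant sets: one on which $G$ acts locally freely, obtained directly from Corollary~\ref{cor-free}, and one on which conclusion $(4)$ of Gromov's centralizer theorem holds, obtained by upgrading the measure--theoretic statement to an open statement using analyticity. First I would apply Corollary~\ref{cor-free} to the compact manifold $M$ (which carries the finite $G$--invariant volume density coming from the unimodular structure) to get an open conull dense $G$--invariant set $U_0\subset M$ on which $G$ acts locally freely. Setting $\wt U_0=\pi^{-1}(U_0)$, where $\pi\colon\wt M\to M$ is the covering projection, gives an open conull dense subset of $\wt M$ that is $\Gamma$--invariant (as a preimage under $\pi$) and $G$--invariant (since $\pi$ is $G$--equivariant); moreover the inclusion $\operatorname{Stab}_G(x)\subset\operatorname{Stab}_G(\pi(x))$ for $x\in\wt U_0$ shows that $G$ acts locally freely on $\wt U_0$ as well.

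The heart of the argument is to show that conclusion $(4)$, which a priori holds only almost everywhere, in fact holds on an open conull dense set. For $x\in\wt M$ I would consider the two integers $r_1(x)=\dim\wt{\ev}_x(\VV)$ and $r_2(x)=\dim\wt{\ev}_x(\VV+\GG)$, where $\VV+\GG$ denotes the span inside $\X(\wt M)$. Since $\omega$ and the action are analytic, both $\VV$ and $\GG$ consist of analytic vector fields, so in any analytic chart $r_1$ and $r_2$ are the ranks of analytic matrix--valued functions. Because $\wt M$ is connected (being a universal cover), the standard determinantal argument shows that each of $r_1,r_2$ attains its maximal value on the complement of a proper analytic subset; call these open conull dense sets $A_1$ and $A_2$, and put $\wt U_1=A_1\cap A_2$, on which $r_1$ and $r_2$ are both constant. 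The key observation is that, since $T_xGx=\wt{\ev}_x(\GG)$ and one always has $\wt{\ev}_x(\VV)\subseteq\wt{\ev}_x(\VV+\GG)$, conclusion $(4)$ at $x$ is equivalent to the rank equality $r_1(x)=r_2(x)$.

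Now I would combine the two inputs. Conclusion $(4)$ holds almost everywhere by Theorem~\ref{zimmer-centralizer}, and $\wt U_1$ is conull, so there exists a point $x_0\in\wt U_1$ at which $r_1(x_0)=r_2(x_0)$. Since $r_1$ and $r_2$ are constant on $\wt U_1$, this forces $r_1\equiv r_2$ on all of $\wt U_1$, whence conclusion $(4)$ holds at every point of $\wt U_1$. It remains to check that $\wt U_1$ is $G$-- and $\Gamma$--invariant: the space $\VV$ is fixed by $G$ (conclusion $(3)$, as in the preceding lemma) and is $\Gamma$--invariant (conclusion $(1)$), while $\GG$ is preserved by both $G$ (adjoint action) and $\Gamma$ (the two actions commute); using the pushforward identity $\wt{\ev}_{\phi(x)}(\mathcal W)=d\phi_x\,\wt{\ev}_x(\mathcal W)$ for a $\phi_*$--invariant space $\mathcal W$ of vector fields and a diffeomorphism $\phi$, one sees that $r_1$ and $r_2$ are $G$-- and $\Gamma$--invariant, hence so are $A_1,A_2$ and $\wt U_1$.

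Finally I would set $\wt U=\wt U_0\cap\wt U_1$. This is an intersection of two open conull dense $G$-- and $\Gamma$--invariant sets, hence is itself open, conull, dense, and invariant under both actions; on it conclusion $(4)$ holds (from $\wt U_1$) and $G$ acts locally freely (from $\wt U_0$). The step I expect to be the main obstacle is the upgrade in the second and third paragraphs: passing from the almost--everywhere statement of Gromov's theorem to an honestly open set. This is exactly where analyticity is indispensable, since it is the constancy of the rank functions on the generic stratum $\wt U_1$ that allows a single good point to propagate the inclusion $T_xGx\subset\wt{\ev}_x(\VV)$ to the whole open set.
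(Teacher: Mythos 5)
Your proof is correct, and its overall scaffolding coincides with the paper's: both take the locally free set from Corollary~\ref{cor-free}, both use analyticity to pass to the maximal-rank stratum of $\wt{\ev}$ (the complement of a proper analytic subset, hence open, dense, and conull), and both then upgrade the almost-everywhere statement of Theorem~\ref{zimmer-centralizer} to that whole stratum. Where you genuinely diverge is in the mechanics of the upgrade. The paper works with the single rank function $x\mapsto\rank(\wt{\ev}_x|_{\VV})$, observes that on its maximal-rank locus the images $\bigcup_x\wt{\ev}_x(\VV)$ form an analytic subbundle of $T\wt{M}$, and then uses continuity of this subbundle together with continuity of $T\OO$ (which requires local freeness) to show that the set $A$ where $T_xGx\not\subset\wt{\ev}_x(\VV)$ is \emph{open}; since $A$ is null and the measure is smooth, $A$ is empty. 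You instead introduce the second rank function $r_2(x)=\dim\wt{\ev}_x(\VV+\GG)$, reformulate conclusion $(4)$ as the pointwise equality $r_1(x)=r_2(x)$, and propagate from a single witness point by constancy of both ranks on the generic stratum. Your version buys two things: it avoids the subbundle/foliation continuity argument entirely, and the $(4)$-part of your argument does not even use local freeness (which you need only for the last clause of the lemma, since $T_xGx=\wt{\ev}_x(\GG)$ holds at every point). The paper's version buys the explicit analytic subbundle $\bigcup_x\wt{\ev}_x(\VV)$ and the constancy of $\rank(\ev_x)$, which are reused later in Proposition~\ref{prop-bundles}; note that your set $\wt{U}$ also has $r_1$ constant, so that later application would go through unchanged. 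Your invariance checks (triviality of the $G$-action on $\VV$ via conclusion $(3)$, $\Gamma$-invariance via conclusion $(1)$ and commutation of the lifted actions, and the pushforward identity for the rank functions) are sound and somewhat more detailed than the paper's one-line assertion.
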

\begin{proof}
Let $\wt{U}_0$ be an open $G$--invariant conull subset of 
$\wt{M}$ on which the $G$--action is locally free as provided by 
Corollary~\ref{cor-free}. Note that $\wt{U}_0$ may be assumed  
to be $\Gamma$--invariant. Consider the natural evaluation map $\wt{\ev} 
\colon \wt{U}_0 \times \VV \rightarrow T\wt{M}$. Since $\wt{\ev}$ is 
analytic it is easily seen that there is an open conull $\Gamma$ and 
$G$--invariant open subset $\wt{U}\subset \wt{U}_0$ (the complement of an
analytic set) on which $\rank(\wt{\ev}_x)$ is
maximal. From this it follows that $\bigcup_{x\in\wt{U}}\wt{\ev}_x(\VV)$
defines an analytic vector subbundle of $T\wt{M}$ on $\wt{U}$
and its continuity, together with that of the tangent bundle to the
orbits, implies that the set $A$ consisting of those points $x \in
\wt{U}$ such that $T_xGx$ is not contained in $\wt{\ev}_x(\VV)$ is open
in $\wt{U}$. But Gromov's centralizer theorem implies that $A$ is null in
$\wt{U}_0$ and since the measure on $\wt{M}$ is smooth it must
be that $A$ is empty. Hence, $\wt{U}$ satisfies conclusion $(4)$ and
has the required properties.
\end{proof}

The following will also prove to be a useful property to consider.

\begin{lemma}
\label{lem-trivial}
Let $G$ and $M$ be as in Corollary~\ref{cor-free}. Then on an open
conull dense $G$--invariant subset $U_0$ of $M$ both $T\OO$ and its
frame bundle $L(T\OO)$ are trivial. Moreover, there is a
trivialization $L(T\OO) \cong \Gl(\mathfrak{g}) \times U_0$, where
$\mathfrak{g}$ denotes the Lie algebra of $G$, so that the $G$--action 
is given by $g(A,m) = (\Ad_G(g)\circ A, gm)$ for every $A\in
\Gl(\mathfrak{g})$, $g\in G$ and $m \in U_0$.
\end{lemma}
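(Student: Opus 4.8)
The plan is to build the trivialization directly from the infinitesimal action and then read off the $G$-action from the standard equivariance of fundamental vector fields. First I would introduce, for each $\xi\in\g$, the fundamental vector field $X_\xi$ on $U_0$ defined by $(X_\xi)_m=\ddt\big|_{t=0}\exp(t\xi)\cdot m$. By the very definition of $T\OO$ in Corollary~\ref{cor-free} these fields are everywhere tangent to the orbits, and at each $m\in U_0$ the vectors $(X_\xi)_m$ span $T_m\OO$, since $T_m\OO=T_m(Gm)$ is the image of the differential at the identity of the orbit map $g\mapsto gm$. This produces a bundle map $\Phi\colon U_0\times\g\to T\OO$, $(m,\xi)\mapsto (X_\xi)_m$, linear in $\xi$ and as regular (smooth or, in the analytic case, analytic) as the action map $G\times M\to M$ from which it is obtained by differentiation.

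The key point is that each $\Phi_m\colon\g\to T_m\OO$ is a linear isomorphism. Surjectivity is the spanning statement just recalled. For injectivity I would use that $G$ acts locally freely on $U_0$: the stabilizer of $m$ is discrete, so its Lie algebra is trivial, and $(X_\xi)_m=0$ precisely when $\xi$ lies in this isotropy algebra; hence $\Phi_m$ has trivial kernel. Equivalently, local freeness gives $\dim T_m\OO=\dim G=\dim\g$, so a surjective linear map between spaces of equal dimension is an isomorphism. Consequently $\Phi$ trivializes $T\OO$, and so both $T\OO$ and its frame bundle $L(T\OO)$ are trivial. Concretely, every frame over $m$ has the form $\Phi_m\circ A$ for a unique $A\in\Gl(\g)$, which yields the identification $L(T\OO)\cong\Gl(\g)\times U_0$ sending $\Phi_m\circ A\mapsto(A,m)$.

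It then remains to compute the $G$-action in this trivialization. Here I would invoke the equivariance identity $dg\,(X_\xi)_m=(X_{\Ad_G(g)\xi})_{gm}$, obtained by differentiating $g\exp(t\xi)g^{-1}=\exp(t\,\Ad_G(g)\xi)$ at $t=0$. Phrased through $\Phi$ this reads $dg\circ\Phi_m=\Phi_{gm}\circ\Ad_G(g)$. Since $G$ acts on $L(T\OO)$ by postcomposition with differentials, a frame $\Phi_m\circ A$ over $m$ is carried to $dg\circ\Phi_m\circ A=\Phi_{gm}\circ(\Ad_G(g)\circ A)$ over $gm$, which under the identification above is the pair $(\Ad_G(g)\circ A,\,gm)$. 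This is exactly the asserted formula $g(A,m)=(\Ad_G(g)\circ A,gm)$, and the $\Gl(\g)$-structure group acts as usual by precomposition, compatibly with the stated conventions.

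I expect the only genuinely delicate point to be the verification that $\Phi_m$ is injective, that is, that local freeness on $U_0$ forces the isotropy Lie algebras to vanish identically; once that is in hand everything else is routine bookkeeping of fundamental vector fields and their $\Ad$-equivariance, plus the observation that $\Phi$ inherits the smooth or analytic regularity of the action so that the resulting trivializations of $T\OO$ and $L(T\OO)$ are of the correct class.
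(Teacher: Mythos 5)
Your proposal is correct and takes essentially the same route as the paper: your map $\Phi$ is the paper's trivialization $\alpha(X,m)=X^*_m$, with fiberwise bijectivity deduced from local freeness in the same way, and your identification $L(T\OO)\cong\Gl(\mathfrak{g})\times U_0$ via $\Phi_m\circ A\mapsto(A,m)$ together with the $\Ad$-equivariance computation $dg\circ\Phi_m=\Phi_{gm}\circ\Ad_G(g)$ reproduces the paper's map $\beta$ and its equivariance check verbatim. Your explicit argument that $(X_\xi)_m=0$ forces $\xi$ into the (trivial) isotropy algebra just spells out what the paper leaves implicit in asserting that a basis of $\mathfrak{g}$ yields a frame of $T_m\OO$ at each point.
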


\begin{proof}
Let $U_0$ be an open subset of $M$ as in Corollary~\ref{cor-free} and
for every $X\in\mathfrak{g}$ let $X^*$ be the vector field on $U_0$
induced by $X$, i.e.~$X^*$ is given at a point $m\in U_0$ by
\begin{equation*}
	X^*_m = \ddt\Bigr|_{t=0} (\exp(tX)m)
\end{equation*}
Consider the bundle map defined by
\begin{align*}
	\alpha \colon \mathfrak{g}\times U_0 &\rightarrow T\OO \\ 
		(X,m) &\mapsto X^*_m
\end{align*}
Since the action is locally free every basis $(X_i)_i$ of $\g$ induces at every
point $m \in U_0$ a family of vectors $((X_i)^*_m)_i$ that also
defines a base for the tangent space to the orbit at $M$, i.e.~to the
fiber of $T\OO$ at $m$. In particular, $\alpha$ trivializes $T\OO$ and 
$L(T\OO)$ is trivial as well.

On the other hand, for $X\in \mathfrak{g}$, $m\in U_0$ and $g\in G$ we
have: 
\begin{align*}
	g X^*_m &= g\ddt\Bigr|_{t=0}(\exp(tX)m)	\\
		&= \ddt\Bigr|_{t=0}(g\exp(tX)m)	\\
		&= \ddt\Bigr|_{t=0}(g\exp(tX)g^{-1} gm) \\
		&= \ddt\Bigr|_{t=0}(\exp(t\Ad_G(g)(X))gm) \\
		&= \Ad_G(g)(X)^*_{gm}
\end{align*}
In particular, the trivialization $\alpha$ is $G$-equivariant for the
action on $\mathfrak{g}\times U_0$ given by $g(X,m) =
(\Ad_G(g)(X), gm)$.

Now observe that if $\alpha_m$ denotes the linear map at the fibers
over $m$ induced by $\alpha$, then the trivialization of $L(T\OO)$ is
given by:
\begin{align*}
	\beta \colon \Gl(\mathfrak{g})\times U_0 &\rightarrow L(T\OO) \\ 
		(A,m) &\mapsto \alpha_m\circ A
\end{align*}
where we have taken $\mathfrak{g}$ as the standard fiber of $T\OO$
when considering $L(T\OO)$ as a frame bundle for $T\OO$. We then have
that for $A\in\Gl(\mathfrak{g})$, $m\in U_0$ and $g\in G$:
\begin{align*}
	(g(\alpha_m\circ A))(v) &= g(\alpha_m\circ A)(v)	\\
				&= gA(v)^*_m			\\
				&= (\Ad_G(g)(A(v)))^*_{gm}	\\
				&= (\alpha_{gm}\circ\Ad_G(g)\circ A)(v)
\end{align*}
for all $v\in \mathfrak{g}$. It follows that $\beta$ is
$G$--equivariant for the $G$--action on $\Gl(\mathfrak{g})\times U_0$
given by  $g(A,m) = (\Ad_G(g)\circ A, gm)$ for every $A\in
\Gl(\mathfrak{g})$, $g\in G$ and $m \in U_0$.
\end{proof}

For every $x \in M$, denote with $\ev_x$ the linear map given by the
bundle map $\ev \colon E^\VV \rightarrow TM$ at the fibers over $x$,
and define the subspaces of $E^\VV_x$ (the fiber of $E^\VV$ at $x$)
given by $T_x = \ev_x^{-1}(T_x Gx)$ and $K_x = \ev_x^{-1}(0)$. The
following result states that over a suitable open set the spaces $T_x$ 
and $K_x$ define analytic vector bundles. 

\begin{prop}
\label{prop-bundles}
Let $G$ be a Lie group acting on a manifold $M$ satisfying the
hypotheses of Gromov's centralizer theorem. Then there is a
conull dense open $G$--invariant subset $U$ of $M$ so that the
following subsets of $E^\VV$ define analytic subbundles of $E^\VV$
over $U$:
\begin{align*}
	T|_U &= \bigcup_{x\in U} T_x \\
	K|_U &= \bigcup_{x\in U} K_x
\end{align*}
\end{prop}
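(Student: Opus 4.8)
The plan is to exhibit both $T|_U$ and $K|_U$ as kernels of analytic bundle maps of locally constant rank, using the standard fact that the kernel of a bundle morphism of locally constant rank is an analytic subbundle. Everything will reduce to checking that the relevant ranks are indeed locally constant, and this is where conclusion $(4)$ of Gromov's centralizer theorem enters decisively.

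First I would fix the open set. By Lemma~\ref{gromov2} there is an open conull dense $G$- and $\Gamma$-invariant subset $\wt{U}\subset\wt{M}$ on which conclusion $(4)$ holds, $G$ acts locally freely, and (as in that proof) $\rank(\wt{\ev}_x)$ is maximal, hence locally constant; intersecting with the preimage of the set produced by Corollary~\ref{cor-free} I may also assume $T\OO$ is an analytic subbundle there. Since $\wt{U}$ is $\Gamma$-invariant it descends to an open conull dense $G$-invariant $U\subset M$, and because $\ev\colon E^\VV\to TM$ restricts on fibers to $\wt{\ev}$, over $U$ we have that $\rank(\ev_x)$ is locally constant, that $T_xGx=T\OO_x$ is an analytic subbundle, and, crucially, that conclusion $(4)$ takes the form $T\OO_x\subset\ev_x(E^\VV_x)$ for every $x\in U$.

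For $K|_U$ the argument is immediate: since $K_x=\ev_x^{-1}(0)=\ker\ev_x$ and $\ev$ is an analytic bundle map of locally constant rank over $U$, the kernels assemble into an analytic subbundle $K|_U$ of $E^\VV$. For $T|_U$ I would use the quotient construction. Over $U$ the analytic subbundle $T\OO\subset TM$ yields an analytic quotient bundle $TM/T\OO$ with projection $\pi$, and the composite $\pi\circ\ev\colon E^\VV\to TM/T\OO$ is an analytic bundle map whose fiberwise kernel is exactly $\ev_x^{-1}(T\OO_x)=T_x$; thus $T|_U=\ker(\pi\circ\ev)$. It then suffices to show $\pi\circ\ev$ has locally constant rank. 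Here conclusion $(4)$ does the work: since $T\OO_x\subset\ev_x(E^\VV_x)$ we get $\ev_x(E^\VV_x)\cap T\OO_x=T\OO_x$, so that $\rank(\pi_x\circ\ev_x)=\rank(\ev_x)-\dim(\ev_x(E^\VV_x)\cap T\OO_x)=\rank(\ev_x)-\dim\g$, using local freeness to identify $\dim T\OO_x=\dim\g$. Both terms on the right are constant on $U$, so $\pi\circ\ev$ has locally constant rank and $T|_U$ is an analytic subbundle.

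The step I expect to carry the real content is the constancy of $\rank(\pi\circ\ev)$, and this is precisely where conclusion $(4)$ is indispensable. Without the inclusion $T\OO_x\subset\ev_x(E^\VV_x)$ the dimension of the intersection $\ev_x(E^\VV_x)\cap T\OO_x$ could jump from point to point even on the locus where $\rank(\ev_x)$ is constant, and then $T_x$ would fail to have constant dimension; it is exactly $(4)$, combined with the constancy of $\rank(\ev_x)$ and the local freeness of the action, that pins this intersection to the constant value $\dim\g$ and thereby forces $T|_U$ to be a genuine subbundle.
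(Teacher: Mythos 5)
Your proof is correct and takes essentially the same route as the paper's: both fix the open set coming from Lemma~\ref{gromov2}, where conclusion $(4)$ holds, the action is locally free, and $\rank(\ev_x)$ is constant, and both reduce the subbundle claim to constancy of the fiber dimensions of $T_x$ and $K_x$. Your packaging of $T|_U$ as $\ker(\pi\circ\ev)$ for the quotient map $\pi\colon TM \to TM/T\OO$ is merely a tidier formulation of the paper's direct dimension count $\dim T_x = \dim\VV + \dim G - \rank(\ev_x)$, which encodes the same use of $(4)$ and local freeness.
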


\begin{proof}
Let $U$ be an open subset of $M$ whose inverse image under the natural
projection $\wt{M}\rightarrow M$ satisfies the conclusion of
Lemma~\ref{gromov2}.  Then we have for every $x \in U$ that
$\ev_x(E^\VV_x) \supset T_x\OO$, where the latter denotes the fiber of
$T\OO$ at $x$. Since $\ev$ is an analytic bundle map and since
\begin{align*}
	\dim T_x &= \dim\VV + \dim G - \rank(\ev_x) \\
	\dim K_x &= \dim\VV - \rank(\ev_x)
\end{align*}
it is enough to observe that, by the proof Lemma~\ref{gromov2}, the map $x
\mapsto \rank(\ev_x)$ is constant on $U$ to obtain the conclusion.
\end{proof}

Finally, we prove the main result of this article:

\begin{theorem}
\label{thm-main}
Let $M$ be a compact analytic manifold acted upon on the left by a
simply connected simple non--compact Lie group $G$ with finite center
preserving a rigid unimodular analytic geometric structure of
algebraic type \upn{(}e.g.~a connection and a volume form, both
analytic\upn{)}. Then there is an open conull dense $G$--invariant open subset
$U$ of $M$ whose lift to $\wt{M}$ has locally closed $G$--orbits. In
particular, the $G$--action is topologically engaging on $U$.
\end{theorem}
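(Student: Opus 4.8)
The plan is to prove the stronger statement that the lifted action of $G$ on $\wt U$ is \emph{proper}; this forces every $G$--orbit to be closed in $\wt U$, hence locally closed, and topological engagement on $U$ then follows from the remarks after Definition~\ref{def-top-eng} (it is enough for the lifted $G$--action to have locally closed orbits, and for an open set the locally closed condition may be read off in $\wt M$). First I would take $U$ to be the intersection of the open conull dense $G$--invariant sets furnished by Lemmas~\ref{gromov2} and \ref{lem-trivial} and Proposition~\ref{prop-bundles}; on its preimage $\wt U$ the action is locally free, conclusion $(4)$ of Gromov's centralizer theorem holds pointwise, and the trivialization $\alpha$ of $T\OO$ of Lemma~\ref{lem-trivial} is defined and analytic (the construction of $\alpha$ and the identity $gX^*_m=(\Ad{(g)}X)^*_{gm}$ apply verbatim to the lifted action on $\wt U$).

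The mechanism I would exploit is that the differential of $g\in G$ along the orbit directions can be estimated in two incompatible ways. On one hand, Lemma~\ref{lem-trivial} gives $dg\circ\alpha_x=\alpha_{gx}\circ\Ad{(g)}$, so that $\Ad{(g)}=\alpha_{gx}^{-1}\circ dg\circ\alpha_x$; since $G$ has finite center, the map $\Ad{}\colon G\to\Gl(\g)$ has finite kernel and closed image and is therefore proper, whence $\Ad{(g_n)}\to\infty$ whenever $g_n\to\infty$ in $G$. On the other hand, by conclusion $(3)$ every field in $\VV$ is $G$--invariant, so $\wt{\ev}_{gx}(X)=dg_x(\wt{\ev}_x(X))$ for all $X\in\VV$; combined with conclusion $(4)$, namely $T_x\OO\subset\wt{\ev}_x(\VV)$ on $\wt U$, this says that $dg$ sends $T_x\OO$ to values at $gx$ of the \emph{fixed} analytic fields in $\VV$. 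Consequently, as long as the orbit stays in a fixed compact set, $dg$ remains bounded on $T_x\OO$, simply by continuity of the finitely many fields making up a basis of $\VV$.

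With these two estimates in place the properness argument is short, and it is also where the only real difficulty lies. I would argue by contradiction: if the orbit map $g\mapsto gx_0$ were not proper, there would be $g_n\to\infty$ with $x_n:=g_nx_0\to z\in\wt U$. Choosing $X_1,\dots,X_d\in\VV$ with $\wt{\ev}_{x_0}(X_i)$ a basis of $T_{x_0}\OO$ (possible by conclusion $(4)$), the identities $dg_n(\wt{\ev}_{x_0}(X_i))=X_{i,x_n}\to X_{i,z}$ show that $dg_n|_{T_{x_0}\OO}$ stays bounded; since $\alpha_{x_n}\to\alpha_z$ is bounded and invertible, the formula $\Ad{(g_n)}=\alpha_{x_n}^{-1}\circ dg_n\circ\alpha_{x_0}$ then forces $\Ad{(g_n)}$ to be bounded, contradicting $\Ad{(g_n)}\to\infty$. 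Hence the orbit map is proper, each orbit is closed in $\wt U$, and therefore locally closed in $\wt M$. The crux of the whole argument is precisely this confrontation: Gromov's centralizer theorem, through the $G$--invariant Killing fields $\VV$ and conclusion $(4)$, supplies the upper bound on $dg_n$ along the orbit, while properness of $\Ad{}$ together with the trivialization $\alpha$ supplies the competing lower bound, and the two can be reconciled only if the $g_n$ remain bounded. The remaining work is bookkeeping: ensuring that all the invoked conclusions hold simultaneously on the single set $\wt U$ and that the limit point $z$ lies in $\wt U$, so that $\alpha_z$ is finite and invertible and the evaluations $X_{i,z}$ are finite.
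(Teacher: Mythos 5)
Your proposal is, at bottom, the same mechanism as the paper's proof of Theorem~\ref{thm-main}, recast pointwise: the paper also plays the $\Ad{}$--equivariant trivialization $\alpha$ of $L(T\OO)$ from Lemma~\ref{lem-trivial} against the $G$--invariance of the fields in $\VV$ (conclusions (3) and (4) of the centralizer theorem), and also concludes by extracting a convergent subsequence of $(g_n)_n$ from convergence of $(\Ad{(g_n)})_n$, using finite center and closedness of $\Ad{(G)}$ in $\Gl(\g)$. What you do differently is strip away the bundle bookkeeping: instead of $L(E^\VV)=\Gamma\backslash(\Gl(\VV)\times\wt{M})$, the subbundle $L(T,K)|_U$, and the equivariant map $\mu\colon L(T,K)|_U\to L(T\OO)$, you work with the identity $\Ad{(g_n)}=\alpha_{x_n}^{-1}\circ dg_n\circ\alpha_{x_0}$ and a direct norm estimate; framing the conclusion as properness of the orbit map is a clean (and mildly stronger) packaging.

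However, there is a genuine gap, and it sits exactly at the step you present as the finished crux. Properness of $\Ad{}\colon G\to\Gl(\g)$ yields that $\Ad{(g_n)}$ leaves every \emph{compact subset of} $\Gl(\g)$; your estimate yields that $\|\Ad{(g_n)}\|$ is bounded. These are not contradictory: a norm--bounded sequence in $\Gl(\g)$ can escape every compact of $\Gl(\g)$ by degenerating to a singular endomorphism, i.e.\ with $\|\Ad{(g_n)}^{-1}\|\to\infty$. Your argument bounds $dg_n\circ\alpha_{x_0}$ from above, but nothing you wrote prevents it from collapsing in the limit: the limit vectors $X_{i,z}$ need not a priori be linearly independent, since conclusion (4) only gives $T_x\OO\subset\wt{\ev}_x(\VV)$ and $\wt\ev$ can have kernel. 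This is precisely where the paper does its real work: the claim $\lambda(cl_{\wt{U}}(Gm_0))\subset L(T,K)|_U$, proved via the dimension counts $\TT_{m_1}=\TT_{m_0}$ and $\KK_{m_1}=\KK_{m_0}$ (resting on the constant rank of $\wt\ev$ on $\wt{U}$ from Lemma~\ref{gromov2} and Proposition~\ref{prop-bundles}), guarantees that the limit of $g_n\,\mu\circ\lambda(m_0)$ is an honest point of $L(T\OO)$, i.e.\ that the limit of $\Ad{(g_n)}\circ A_1$ is \emph{invertible}. Two patches are available to you: (a) replicate that kernel count --- since $\KK_{gm}=\KK_m$ and $x_0,z\in\wt{U}$ where $\dim\KK_x$ is constant, one gets $\KK_{x_0}\subset\KK_z$ and hence $\KK_z=\KK_{x_0}$, so $\sum_i a_iX_{i,z}=0$ forces $\sum_i a_iX_i\in\KK_{x_0}$ and thus $a=0$; the $X_{i,z}$ are then a basis of $T_z\OO$ and the symmetric estimate on $\Ad{(g_n)}^{-1}=\alpha_{x_0}^{-1}\circ d(g_n^{-1})\circ\alpha_{x_n}$ gives precompactness; or (b) a one--liner: $\Ad{(G)}$ preserves the nondegenerate Killing form $B$, so with $J$ the Gram matrix of $B$ one has $\Ad{(g)}^{-1}=J^{-1}\,{}^t\Ad{(g)}\,J$, hence $\|\Ad{(g_n)}^{-1}\|\leq C\|\Ad{(g_n)}\|$ and norm--bounded subsets of $\Ad{(G)}$ are precompact in $\Gl(\g)$, after which your contradiction is valid. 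With either repair the proof is complete.
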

\begin{proof}
By Corollary~\ref{cor-free} and Proposition~\ref{prop-bundles} there
is an open conull dense $G$--invariant subset $U$ of $M$ so that the
action on $U$ is locally free and the sets $T|_U$ and $K|_U$ define
analytic subbundles of $E^\VV$ over $U$. From the proof of the previous
results it follows easily that $U$ is $\Gamma$--invariant. 

Let us denote with $N$, $m$ and $n$ the ranks of the bundles
$E^\VV$, $T|_U$ and $K|_U$, respectively. Hence, it is clear that the
subset of $L(E^\VV)$ given by $L(T,K)|_U = \{ u \in L(E^\VV)|_U \;\vert\;
u(\R^m) = T_{p(u)},\ u(\R^n) = K_{p(u)}\}$ (each $u \in L(E^\VV)$
is considered as a frame of $E^\VV$) is an analytic principal
subbundle of $L(E^\VV)$ over $U$, where $L(E^\VV)|_U$ is the
restriction of $L(E^\VV)$ to $U$ and $p \colon L(E^\VV) \rightarrow
M$ is the canonical projection. Also observe that the bundles $T|_U$,
$K|_U$ and $L(T,K)|_U$ are all $G$--invariant.

Let $\wt{U}= \pi^{-1}(U)$ be the open subset of $\wt{M}$ where $\pi$
is the projection of $\wt{M}$ onto $M$. Given $m_0 \in \wt{U}$ we will 
prove that the $G$--orbit of $m_0$ is closed in $\wt{U}$.

For $A_0 \in \Gl(\VV)$ consider the map $\lambda \colon \wt{M} 
\rightarrow L(E^\VV) = \Gamma\backslash(\Gl(\VV)\times \wt{M})$ given
by $\lambda(m) = [A_0,m]$. Observe that as a subset of $L(E^\VV) =
\Gamma\backslash(\Gl(\VV)\times \wt{M})$ we can identify $L(T,K)|_U =
\{ [A,m] \;\vert\; m\in \wt{U}, A\in \Gl(\VV), A(\TT_0) = \TT_m, A(\KK_0) =
\KK_m \}$, where $\TT_m = \{ X\in \VV \;\vert\; X_m\in T\OO \}$, $\KK_m = \{
X\in \VV \;\vert\; X_m = 0 \}$ and $\TT_0$, $\KK_0$ are the corresponding
spaces at a fixed base point of $\wt{U}$. 

Since $G$ acts trivially on
$\VV$ it is straightforward to check that $\TT_{gm} = g\TT_m = \TT_m$
and $\KK_{gm} = g\KK_m = \KK_m$ for every $m\in \wt{U}$ and $g \in
G$. Since the fibers of $L(E^\VV)$ are acted upon by the structure
group transitively, we can choose $A_0$ so that $\lambda(m_0)\in
L(T,K)|_U$, and we then have in particular that 
\begin{equation}
\label{eq-m0}
\begin{aligned}
	A_0(\TT_0) &= \TT_{m_0}	\\
	A_0(\KK_0) &= \KK_{m_0}
\end{aligned}
\end{equation}
But since $\lambda$ is a $G$--equivariant map and $L(T,K)|_U$ is
$G$--invariant we conclude that $\lambda(Gm_0)\subset L(T,K)|_U$.

We claim that we further have $\lambda(cl_{\wt{U}}(Gm_0))
\subset L(T,K)|_U$ where $cl_{\wt{U}}(Gm_0)$ is the closure of
$Gm_0$ in $\wt{U}$. To see this we need to show that for every
sequence $(g_n m_0)_n$ in $\wt{U}$ that converges to $m_1\in\wt{U}$ we
have $\lambda(m_1) \in L(T,K)|_U$. In other words, we need to show
that $A_0(\TT_0) = \TT_{m_1}$ and $A_0(\KK_0) = \KK_{m_1}$, and
equation~(\ref{eq-m0}) makes this equivalent to showing that
$\TT_{m_0} = \TT_{m_1}$ and $\KK_{m_0} = \KK_{m_1}$.
Choose $X\in \TT_{m_0}$ and observe that $X\in T_{g_n m_0}$ for every
$n$. In other words, $X_{g_n m_0} \in T\OO$ and since $g_n m_0
\rightarrow m_1$ in $\wt{U}$ and $T\OO$ is the tangent bundle to a
foliation in $\wt{U}$ we conclude that $X_{m_1} \in T\OO$, i.e.~ $X\in
\TT_{m_1}$. Since both spaces have the same dimension it follows that
$\TT_{m_1} = \TT_{m_0}$, and a similar argument proves the claim for
$\KK$. 

Observe that on $U$ the bundle map $\ev$ induces an isomorphism between
the quotient bundle $T|_U/K|_U$ and $T\OO$ ($T\OO$ is defined on $U$
only) which is essentially a consequence of the definitions of $T|_U$
and $K|_U$. If we denote with $L(T\OO)$ the principal fiber bundle
over $U$ associated to $T\OO$, then it is easy to check that the map:
\begin{align*}
	\mu \colon L(T,K)|_U &\to L(T\OO)	\\
		u &\mapsto \tilde{u}
\end{align*}
where $\tilde{u}$ denotes the isomorphism $\R^{m-n} \to T_{p(u)}p(u)
G$ induced by $u$ and $\ev_{p(u)}$ ($p(u)\in M$ is the base point
of $u$), defines a homomorphism of principal bundles which is easily
seen to be $G$--equivariant.

By Lemma~\ref{lem-trivial}, the bundle $L(T\OO)$ over $U$ is
$G$--equivariantly analytically equivalent to $\Gl(\g) \times U$
where the $G$--action on the latter is given by $g(A,m) =
(\Ad_G(g)\circ A, gm)$.

Now let $m_1 \in cl_{\wt{U}}(m_0 G)$, so there is a sequence $(g_n
m_0)_n$ that converges to $m_1$. From the above it follows that
$g_n\mu\circ\lambda(m_0) \rightarrow \mu\circ\lambda(m_1)$, i.e.~we
have $g_n(A_1,m_0) \to (A_2,m_1)$ in $L(T\OO)$ for some $A_1, A_2 \in
\Gl(\mathfrak{g})$, so that given the above action on $L(T\OO) \cong
\Gl(\g) \times U$ it follows that $(\Ad_G(g_n))_n$ converges in
$\Ad_G(G)$ and since $G$ has finite center we can replace $(g_n)_n$ by
a subsequence to assume that $(g_n)_n$ converges to some $g\in
G$. From this it follows that $m _1 = gm_0$ and so the orbit $Gm_0$
is closed in $\wt{U}$.
\end{proof}

A straightforward consequence is given by the following result.

\begin{cor}
Let $G$ be a group acting on a manifold $M$ as in Theorem~\ref{thm-main}.
If the $G$--action on $M$ is minimal, i.e.~all orbits are dense, 
then the $G$--action on $M$ is topologically engaging.
\end{cor}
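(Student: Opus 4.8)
The plan is to deduce the corollary directly from Theorem~\ref{thm-main} by using minimality to upgrade the conclusion from the open set $U$ to all of $M$. First I would invoke Theorem~\ref{thm-main} to produce the open conull dense $G$--invariant subset $U\subset M$ whose lift $\wt{U}$ to $\wt{M}$ has locally closed $G$--orbits; by the last sentence of that theorem, $U$ is precisely a set on which the $G$--action is already known to be topologically engaging.

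Next I would observe that minimality forces $U$ to be all of $M$. Indeed, since $U$ is open and $G$--invariant, its complement $M\setminus U$ is closed and $G$--invariant, and because $U$ is dense (in particular nonempty) the complement $M\setminus U$ is a \emph{proper} closed $G$--invariant subset. Under the hypothesis that every $G$--orbit is dense, the only closed $G$--invariant subsets of $M$ are $\emptyset$ and $M$: any nonempty closed invariant set contains an orbit, hence contains its closure, which is all of $M$. Consequently $M\setminus U$ must be empty, that is, $U=M$.

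Finally I would conclude. Since $U=M$, the lift $\wt{M}$ itself has locally closed $G$--orbits, so by the remark following Definition~\ref{def-top-eng} (and the final assertion of Theorem~\ref{thm-main} applied with $U=M$) the $G$--action on $M$ is topologically engaging. The argument carries essentially no obstacle: all of the analytic work is done inside Theorem~\ref{thm-main}, and the only point requiring any care here is the elementary equivalence between minimality (all orbits dense) and the absence of proper nonempty closed invariant subsets, which is exactly what lets me promote the engagement from $U$ to $M$.
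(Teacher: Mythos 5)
Your proof is correct and is exactly the ``straightforward consequence'' the paper has in mind (the paper omits the argument entirely): minimality forces the open dense $G$--invariant set $U$ of Theorem~\ref{thm-main} to equal $M$, since its complement is a proper closed invariant set, and then the theorem's conclusion gives topological engagement on all of $M$. Nothing to add.
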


\section{Further developments}

As it is observed in the previous sections, geometric engagement is a
condition stronger to but closely related to topological engagement. A 
natural problem is to determine whether or not connection preserving
actions as those studied here are geometrically
engaging. In~\cite{CQ99} it has been proved that essentially all known 
actions of that sort are geometrically engaging, but the problem still 
remains open.

Nevertheless, its the authors belief that connection preserving
actions are geometrically engaging. Moreover, we expect that some
(nontrivial) extensions of the arguments in this work might allow to
prove this fact. Notice that topological engagement is a purely
topological condition while geometric engagement requires the choice
of a Riemannian metric and some distance estimates, so the proof of
geometric engagement should be considerably more complicated. On the
other hand, the applications that would arise from such fact would be
stronger than some of those obtained from topological engagement, as
it has been remarked above and in \cite{CQ99}.

\end{document}